\providecommand{\abs}[1]{\lvert#1\rvert}% abs operator
\renewcommand{\b}[1]{\ensuremath{\mathbf{#1}}} % bold letters
\newcommand{\Ex}[1]{\ensuremath{\EE[#1]}}  % expectation operator
\newcommand{\Et}[1]{\ensuremath{\mathbb{E}_t[#1]}}  % expectation operator
\newcommand{\norm}[1]{\ensuremath{\left\|#1\right\|}} % norm operator
\newcommand{\mat}[1]{\ensuremath{\begin{bmatrix}#1\end{bmatrix}}} % matrix
\providecommand{\tr}[1]{\mathrm{tr}\left(#1\right)} % trace
\newcommand{\eqtext}[1]{\ensuremath{\stackrel{\text{#1}}{=}}} % text above =
\newcommand{\leqtext}[1]{\ensuremath{\stackrel{\text{#1}}{\leq}}} % text above <
\providecommand{\ip}[2]{\langle #1, #2 \rangle} % inner product operator
\newcommand{\col}[1]{\textcolor{blue}{#1}} % blue colored text 
\renewcommand{\O}[1]{\mathcal{O}}
\def \e {{\b{e}}}
\def \g {{\b{g}}}
\def \h {{\b{h}}}
\def \n {{\b{n}}}
\def \q {{\b{q}}}
\def \p {{\b{p}}}
\def \r {{\b{r}}}
\def \s {{\b{s}}}
\def \u {{\b{u}}}
\def \v {{\b{v}}}
\def \w {{\b{w}}}
\def \x {{\b{x}}}
\def \y {{\b{y}}}
\def \z {{\b{z}}}
\def \zero {{\mathbf{0}}}
\def \grd {{\text{grad}}}
\def \EE {{\mathbb{E}}}
\def \G {{\mathbf{G}}}
\def \cC {{\mathcal{C}}}
\def \cF {{\mathcal{F}}}
\def \cG {{\mathcal{G}}}
\def \O {{\mathcal{O}}}
\def \cL {{\mathcal{L}}}
\def \cI {{\mathcal{I}}}
\def \cN {{\mathcal{N}}}
\def \cT {{\mathcal{T}}}
\def \A {{\mathbf{A}}}
\def \B {{\mathbf{B}}}
\def \C {{\mathbf{C}}}
\def \D {{\mathbf{D}}}
\def \G {{\mathbf{G}}}
\def \H {{\mathbf{H}}}
\def \I {{\mathbf{I}}}
\def \M {{\mathbf{M}}}
\def \N {{\mathbf{N}}}
\def \P {{\mathbf{P}}}
\def \Q {{\mathbf{Q}}}
\def \R {{\mathbf{R}}}
\def \U {{\mathbf{U}}}
\def \V {{\mathbf{V}}}
\def \W {{\mathbf{W}}}
\def \X {{\mathbf{X}}}
\def \Y {{\mathbf{Y}}}
\def \sZ {{\mathsf{Z}}}
\providecommand{\ipx}[2]{\langle #1, #2 \rangle_{\X}} % inner product on tangent space
\def \xib {{\boldsymbol{\xi}}}
\def \etab {{\boldsymbol{\eta}}}
\def \T {{\mathsf{T}}}
\def \Rn {{\mathbb{R}}}
\def \Pn {{\mathbb{P}^n}}
\def \Sn {{\mathbb{S}^n}}
\def \Expx {{\mathrm{Exp}_\X}}
\def \Expxt {{\mathrm{Exp}_{\X_t}}}
	\edef\@currentlabel{\number\value{equation}}\label{ctr:last-equation}%
	\edef\@currentlabel{\number\value{lemma}}\label{ctr:last-lemma}%
\newtheorem{assumption}{}
\newtheorem{theorem}{Theorem}
\newtheorem{lemma}{Lemma}
\newtheorem{definition}{Definition}
\date{}
\begin{document}

\title{Rank-one Riemannian Subspace Descent for Nonlinear Matrix Equations
 \author{Yogesh Darmwal, Ketan Rajawat}}
\maketitle

\begin{abstract}%   <- trailing '%' for backward compatibility of .sty file
We propose a rank-one Riemannian subspace descent algorithm for computing symmetric positive definite (SPD) solutions to nonlinear matrix equations arising in control theory, dynamic programming, and stochastic filtering. For solution matrices of size $n\times n$, standard approaches for dense matrix equations typically incur $\O(n^3)$ cost per-iteration, while the efficient $\O(n^2)$ methods either rely on sparsity or low-rank solutions, or have iteration counts that scale poorly. The proposed method entails updating along the dominant eigen-component of a transformed Riemannian gradient, identified using at most $\O(\log(n))$ power iterations. The update structure also enables exact step-size selection in many cases at minimal additional cost. For objectives defined as compositions of standard matrix operations, each iteration can be implemented using only matrix--vector products, yielding $\O(n^2)$ arithmetic cost. We prove an $\O(n)$ iteration bound under standard smoothness assumptions, with improved bounds under geodesic strong convexity. Numerical experiments on large-scale CARE, DARE, and other nonlinear matrix equations show that the proposed algorithm solves instances (up to $n=10{,}000$ in our tests) for which the compared solvers, including MATLAB's \texttt{icare},  structure-preserving doubling, and subspace-descent baselines fail to return a solution. These results demonstrate that rank-one manifold updates provide a practical approach for high-dimensional and dense SPD-constrained matrix equations. MATLAB code implementation is publicly available on GitHub : \href{https://github.com/yogeshd-iitk/nonlinear_matrix_equation_R1RSD}{\col{https://github.com/yogeshd-iitk/nonlinear\_matrix \_equation\_R1RSD}}
\end{abstract}

\section{Introduction}\label{intro}
Symmetric positive definite (SPD) solutions of algebraic Riccati and Lyapunov equations are fundamental in modern control, providing standard certificates and tools for analyzing stability, performance, and robustness \cite{zhou1996robust}.  However, as the state dimension grows, computing stabilizing SPD solutions reliably and efficiently becomes a major computational bottleneck. In the absence of exploitable structure, such as sparsity or low rank, most state-of-the-art solvers require $\O(n^3)$ or more floating point operations (flops) at every iteration when the state dimension is $n$ \cite{benner2004solving, benner2020numerical}. 

%In this work, we develop a Rank-one Riemannian Subspace Descent (R1RSD) method that, to the best of our knowledge, is the first to achieve $\mathcal{O}(n^2\log n)$ per-iteration complexity for dense NMEs on $\Pn$ without relying on sparsity or low-rank assumptions, while requiring only $\mathcal{O}(n)$ iterations. This combination yields near-optimal overall complexity within the class of rank-one update schemes and enables computations at problem sizes where cubic-cost iterations are impractical.

In this work, we consider nonlinear matrix equations (NMEs) involving a matrix variable $\X$ and constant matrices $\A,\C,\ldots$, of the form
\begin{align}
\cG\left(\X,\A,\C, \ldots\right)&=	\zero & \X \in \Pn, \tag{$\mathcal{P}_e$} \label{eqn_form}
\end{align}
where $\Pn$ denotes the set of $n \times n$ real symmetric positive definite (SPD) matrices. Here, $\cG$ is a matrix-valued nonlinear mapping built from standard algebraic matrix operations, such as transpose, addition, multiplication, inverse, and log-determinant. We recast \eqref{eqn_form} as a nonlinear least-squares problem
\begin{align}
	\min_{\X \in \Pn} \norm{\cG\left(\X,\A,\C, \ldots\right)}_F^2\label{residual_norm}
\end{align}
which enables Riemannian optimization directly on $\Pn$ and paves the way for the proposed computationally efficient solver. Moreover, any SPD solution to \eqref{eqn_form} is a global minimizer of \eqref{residual_norm} with objective value zero. If \eqref{eqn_form} is not solvable due to errors or perturbations in the coefficient matrices, \eqref{residual_norm} still returns a meaningful best fit SPD solution. 

Specific instances of \eqref{eqn_form} arising in control theory include continuous- and discrete-time algebraic Lyapunov equations (CALE and DALE), continuous- and discrete-time algebraic Riccati equations (CARE and DARE) \cite{anderson2007optimal}, generalized CARE and DARE \cite{ionescu1997general}, as well as the stochastic CARE and DARE \cite{rami2000linear}. Beyond control theory, instances of \eqref{eqn_form} also appear in nano research \cite{guo2012numerical, bollhofer2017low}, interpolation theory \cite{ran2004nonlinear}, and ladder networks \cite{anderson1983ladder}. 

Over the years, numerous algorithms for computing positive definite solutions have been proposed and analyzed. Examples include sign function methods \cite{benner2015matrix}, Alternating Direction Implicit (ADI) iterations \cite{wachspress2013adi}, fixed-point iterations \cite{dai2011eigenvalue, meng2016positive, meng2017positive, haqiri2017methods, meng2018positive}, structure-preserving doubling algorithms \cite{lin2006convergence, huang2018structure, huang2019some, zhang2020structure}, Newton’s and quasi-Newton methods \cite{huang2018some, weng2021solving}, multi-step stationary iterative methods \cite{zhai2021solvability}, gradient-based methods \cite{ding2005gradient, ding2006iterative, xie2016accelerated}, inverse-free methods \cite{monsalve2010new, huang2013inversion, huang2018some, wang2024new}, quasi-gradient-based inversion-free methods \cite{zhang2019quasi, huang2018some}, and accelerated algorithms \cite{lin2018accelerated, li2022fixed}. Despite this breadth, the per-iteration complexity of all these algorithms remains $\O(n^3)$ or higher for the general case, making them unsuitable for large-scale problems. Algorithms achieving lower per-iteration complexities typically rely on additional assumptions such as $\X$ being low rank and coefficient matrices being sparse \cite{li2002low, benner2009adi, vandereycken2010riemannian, benner2015matrix, benner2016inexact, benner2020numerical}. However, without such low-rank and sparsity assumptions, the per-iteration cost of these algorithms remains $\O(n^3)$, limiting their applicability to high-dimensional problems \cite{yu2019large}.

Riemannian geometric methods have gained significant attention \cite{lee2008invariant, jung2009solution, duan2013natural, han2021riemannian, han2024riemannian} for their ability to leverage the intrinsic geometry of SPD constraint. To reduce per-iteration costs, subspace/coordinate descent algorithms that update only a subset of directions \cite{bertsekas1999nonlinear, nesterov2012efficiency, fercoq2015accelerated} have recently been extended to manifold settings \cite{celledoni2008descent, shalit2014coordinate, gao2018new, gutman2023coordinate}. 
For the SPD manifold in particular, subspace descent algorithms were proposed in \cite{darmwal2023low, han2024riemannian}. Of these, \cite{han2024riemannian} is the only method that reduces the dominant per-iteration cost on the SPD manifold to $\O(n^2)$ flops for the general case of \eqref{residual_norm}. This quadratic scaling is a practical prerequisite in large-scale settings, since dense $O(n^3)$ operations can become prohibitive to the point that standard implementations may be unable to run for large $n$ due to time and memory constraints. For example, when $n=10^4$, a single dense $\mathcal{O}(n^3)$ iteration requires $10^{12}$ floating point operations, whereas a  $\mathcal{O}(n^2)$ iteration requires only $10^{8}$ operations, often marking the difference between infeasible and tractable computation. Although subspace descent approaches generally require more iterations, they remain viable at problem sizes where standard approaches are computationally infeasible.

In this work, we propose a scalable Rank-one Riemannian Subspace Descent (R1RSD) algorithm for solving \eqref{residual_norm}. The proposed algorithm features rank-one updates that require only $\O(n^2\log n)$ flops per-iteration and $\O(n)$ total iterations under standard assumptions. The proposed algorithm improves upon the state-of-the-art subspace descent method in \cite{han2024riemannian} which has a similar $\O(n^2)$ per-iteration complexity but requires $\O(n^2)$ total iterations.  We summarize our key contributions as follows.
\begin{itemize}[leftmargin=*]
	\item Different from various Riemannian subspace descent algorithms that are based on projecting the Riemannian gradient onto a subspace of the tangent space, we express a transformed version of the Riemannian gradient as a sum of rank-one eigen-components and then update the iterate along a carefully selected component at each step. 
	\item We employ the power method to identify the component that yields the largest first-order decrease within the candidate family and show that some objectives admit closed-form step-size selection at every iteration without significant additional effort. In many cases, the proposed algorithms also allow efficient calculation of the residual norm objective, which may otherwise be difficult \cite{kurschner2016efficient, benner2020numerical}, and can be used as a stopping criteria or to check global optimality. 
	\item We provide a detailed analysis of the proposed algorithm, including the inexact power method direction selection, establishing the $\O(n^2\log n)$ bound on the flops per-iteration and $\O(n)$ bound on the iteration complexity for both, the general non-convex case and the geodesically strongly convex case. 
	\item We benchmark the numerical performance of the proposed algorithm on large-scale CAREs, comparing the performance against state-of-the-art algorithms \cite{han2024riemannian,huang2018structure} as well as MATLAB's built-in function \texttt{icare}. We further report scaling experiments that identify the problem-size regime where these standard solvers become computationally prohibitive, while R1RSD remains practical due to its rank-one update structure. We also evaluate the performance on an instance of DARE and another commonly used NME. 
\end{itemize}
The proposed algorithm may be seen as updating along a $\O(n)$-dimensional subspace as was also proposed in the RRSD-multi algorithm of \cite{darmwal2023low}. However, the present work targets structured rank-one updates for a specific class of problems and hence achieves lower $\O(n^2)$ computational complexity per-iteration, as opposed to the $\O(n^3)$ complexity achieved by \cite{darmwal2023low} when applied to \eqref{residual_norm}. To support reproducibility and adoption, we provide an implementation and scripts that reproduce the main experiments reported in the paper.

Finally, we remark that the proposed low-complexity algorithms can also be applied to time-varying, adaptive, and nonlinear feedback control, where solving Riccati or Lyapunov equations at every time step is infeasible \cite{ioannou2006adaptive}. In particular, within the State-Dependent Riccati Equation (SDRE) framework for nonlinear control \cite{saluzzi2025state}, the proposed methods can enable efficient tracking of slowly varying Riccati solutions using warm starts and a small number of iterations per time step. Compared to the standard cascade Newton--Kleinman methods \cite{saluzzi2025dynamical}, the resulting approach would achieve almost $n$-fold reduction in the per time-step complexity, making it suitable for high-dimensional systems. This facilitates online and embedded workflows in which Riccati or Lyapunov equation must be solved repeatedly, e.g., within adaptive control loops or receding-horizon updates.

\subsection{Related Work}
Classical numerical methods for solving Lyapunov/Sylvester equations, generalized linear matrix equations, and Riccati equations, such as invariant or deflating subspace techniques, ADI iterations, fixed-point methods, generalized Schur-based solvers, and Kleinman–Newton variants, are surveyed in \cite{datta2004numerical,bini2011numerical,simoncini2016computational}. Over the past two decades, structure-preserving doubling algorithms have been extensively developed for Riccati equations and certain classes of NMEs \cite{huang2018structure}. Among these, all the iterative approaches incur a per-iteration computational cost of $\mathcal{O}(n^3)$. 
%, while invariant or deflating subspace techniques, although non-iterative, still require an overall computational cost of $\mathcal{O}(n^3)$.}

A comparative overview of state-of-the-art algorithms for solving general NMEs, in terms of per-iteration cost and convergence behavior, is provided in Table~\ref{sota_algo}. Invariant or deflating subspace techniques, fixed-point iterations, and structure-preserving doubling algorithms are primarily tailored to solve the Riccati equation and a limited class of NMEs. More general approaches, including Riemannian gradient-based algorithms, can be applied to solve \eqref{residual_norm} but typically exhibit a per-iteration cost of $\mathcal{O}(n^3)$. An exception is the Burer–Monteiro factorization-based (BMFC) approach, which reduces per-iteration complexity at the expense of operating in a higher-dimensional search space.

Beyond Lyapunov, Sylvester, and Riccati equations, several classes of NMEs have been studied in the literature. One such class and its generalizations are investigated in \cite{Anderson1990positive,huang2018some,weng2021solving,li2022fixed,erfanifar2022efficient,jia2023hermitian,engwerda1993necessary,ferrante1996hermitian,ivanov2005improved,zhou2013positive}, where Newton-type and fixed-point methods are proposed. Another class is considered in \cite{meng2017positive,mouhadjer2022new,li2014hermitian,yu2022iterative,jin2022nonlinear}, which primarily employs fixed-point and inverse-free fixed-point iterations. Further generalizations involving real powers of the matrix variable are studied in \cite{hasanov2005positive,yin2010positive,cai2010hermitian,zhang2011positive,peng2007positive,lim2009solving,fang2017positive,jin2021investigation,liu2011hermitian,liu2014hermitian,li2019investigation,yin2014nonlinear,erfanifar2022efficient}, again largely relying on inverse-free fixed-point schemes. Finally, a few works also address matrix equations involving transcendental matrix functions, such as the matrix exponential, and propose fixed-point algorithms for solving them \cite{ran2004fixed,gao2016hermitian}.

Well-established software packages, including widely used MATLAB routines for Lyapunov and Riccati equations, are integral to control system workflows, and their numerical performance has been extensively studied \cite{datta2004numerical,bini2011numerical,benner2020numerical}. In particular, MATLAB provides functions such as \texttt{lyap}, \texttt{dlyap}, \texttt{care}, \texttt{dare}, \texttt{idare}, \texttt{icare}, and \texttt{gdare} for solving various Lyapunov and algebraic Riccati equations.

	\begin{table}[h!]
	\begin{center}
		\begin{tabular}{|p{4.5cm} |p{1.3cm}|p{1.5cm}|}
			\hline
			Algorithm  & Convergence rate & Per-iteration complexity \\
			\hline
			Invariant/Deflating subspace &  -    &  \multirow{7}{*}{$\O(n^3) $}   \\ 
			\cline{1-2}
			Kleinman-Newton \cite{bini2011numerical}&  quadratic   &        \\
			\cline{1-2}
			Inexact Kleinman-Newton	\cite{feitzinger2009inexact} & quadratic     &         \\
			\cline{1-2}
			Structure-preserving doubling algorithms\cite{chu2005structure} & quadratic   &    \\
			\cline{1-2}
			Fixed-point \cite{dai2011eigenvalue}  & linear    &      \\
			\cline{1-2}
			Inverse free \cite{liu2022iterative}   & linear    &    \\
			\cline{1-2}
			
			Riemannian gradient descent	\cite{zhang2016first} & linear  &        \\
			\cline{1-2}
			RRSD  algorithm \cite{darmwal2023low} & linear  &        \\
			\hline	
			BMFC  algorithm \cite{han2024riemannian} & linear  &       $\O(n^2)$     \\
			\hline	
			R1RSD algorithm  [proposed] & linear   &       $\O(n^2 \log n)$     \\
			\hline	
		\end{tabular}
	\end{center}
	\caption{Comparison of solvers for \eqref{residual_norm}.}\label{sota_algo}
\end{table}

\section{Notation and Background}\label{paper.sec.2}
This section outlines the notation used in this work and presents the necessary background on Riemannian geometric concepts. 

\subsection{Notation}\label{notation} We denote vectors (matrices) by boldface lower (upper) case letters. The cardinality of  a set $\cI$ is denoted by $|\cI|$. The trace and transpose operations are denoted by $\tr{\cdot}$ and $(\cdot)^\T$, respectively.  
The lower triangular Cholesky factor of the symmetric positive definite matrix $\A$ is denoted by $\cL( \A)$, so that $\A = \cL(\A)\cL(\A)^\T$ and its largest eigenvalue is denoted by $\lambda_{1}(\A)$. The Euclidean gradient of a function $f:\Rn^{n\times n} \rightarrow \Rn$ is denoted by $\grd \;f(\X)$, while its Riemannian gradient is denoted by $\grd^R \;f(\X)$. The Euclidean and Frobenius norms are denoted by $\norm{\cdot}_2$ and  $\norm{\cdot}_F$, respectively. The tangent space at a point $\X \in \Pn$ is denoted by $T_\X\Pn$, with tangent vectors denoted by boldface Greek lower case letters, e.g., $\xib$. Given two tangent vectors $\xib$, $\etab \in T_\X\Pn$, their inner product is given by $\ipx{\xib}{\etab}$ and the corresponding norm is given by $\norm{\xib}_\X := \sqrt{\ipx{\xib}{\xib}}$.  
Given arbitrary $\X, \Y \in \Pn$, and the geodesic $\gamma(\lambda)$ joining them so that $\gamma(0)=\X$ and $\gamma(1)=\Y$, the tangent vector at $\X$ is denoted by $\xib_{\X\Y} := \gamma'(0)$. The power and exponential maps of an SPD matrix $\W$ with eigenvalue decomposition $\U\D\U^\T$ are given by 
\begin{align*}
	\W^k &= \U\D^k\U^\T & \exp(\W) &= \U\exp(\D)\U^\T = \sum_{k=0}^\infty \frac{\W^k}{k!}
\end{align*}
where $[\D^k]_{ii} = [\D]_{ii}^k$ and $[\exp(\D)]_{ii} = \exp([\D]_{ii})$ for all $1\leq i \leq n$. 
\subsection{Background on manifold optimization} 
Riemannian manifolds are nonlinear spaces equipped with a Riemannian metric (\cite{lee2018introduction}).  For the SPD manifold $\Pn = \Sn_{++}$, the tangent space  $T_\X\Pn$ can be identified with the set of symmetric matrices $\mathbb{S}^n$ (\cite{bridson2011metric}) via a natural isomorphism. We adopt the following Riemannian metric, commonly known as the affine-invariant metric:
\begin{eqnarray}
	\ipx{\xib}{\etab} = \tr{\X^{-1}\xib\X^{-1}\etab}. \label{metric} 
\end{eqnarray}
The choice of the metric in \eqref{metric} renders $\Pn$ a Hadamard manifold, i.e., a manifold with non-positive sectional curvature. In this context, it may be computationally efficient to use the Cholesky-inspired congruence mapping \cite{godaz2021vector}
\begin{align}
\cC_\X(\xib) = \B^{-1}\xib \B^{-\T} \label{mapping}	
\end{align}
where $\B = \cL(\X)$.  Under this mapping, the metric \eqref{metric} is reduced to the Euclidean inner product, i.e., $\ipx{\xib}{\etab} = \tr{\cC_\X(\xib)\cC_\X(\etab)}$. Further, the geodesic $\gamma:[0,1]\rightarrow \Pn$ starting at $\X \in \Pn$ in the direction of the tangent vector $\gamma'(0) = \xib$ is given by $	\gamma(\lambda) = \B\exp\left(\lambda\cC_\X(\xib)\right)\B^{\T}$~\cite{darmwal2023low}. Finally, $\Expx:T_\X\Pn \rightarrow \Pn$ is the exponential map such that $\Expx(\xib) = \gamma(1)$. Next, we introduce some  important definitions for the Riemannian manifold $\Pn$ with the Riemannian metric as specified in \eqref{metric}.
\begin{definition}[\textbf{Directional derivative}]
	Let $f:\Pn \rightarrow \Rn$ be a smooth function and $\gamma(\lambda):\mathbb{R}\rightarrow \Pn$ be a smooth curve satisfying $\gamma(0)=\X$ and $\gamma'(0)=\xib$.  The directional derivative of $f$  at $\X$ in the direction $\xib \in T_{\X}\Pn$ is the
	scalar \cite[p.~40]{absil2007optimization}:
	\begin{align}
		Df_{\X}(\xib)= \frac{d}{d\lambda}f(\gamma(\lambda))\Big|_{\lambda=0} 
	\end{align}
\end{definition}
\begin{definition}[\textbf{Riemannian gradient}]\label{Riemann_grad}
	The Riemannian gradient of a differentiable function $f:\Pn\rightarrow \Rn$ at $\X \in \Pn$  is defined as the unique tangent vector  $\grd^R \;f(\X) \in T_{\X}\Pn$ satisfying \cite[p.~46]{absil2007optimization}:
	\begin{align}
		Df_{\X}(\xib)&=\langle \grd^R \;f(\X),\xib\rangle_{\X}
	\end{align}
\end{definition}
The Riemannian and Euclidean gradients for $\Pn$ are related by \cite[p. 722]{Sra2015conic}:
\begin{align}
	\grd^R\; f(\X) = \X\grd \; f(\X)\X. \label{RiemanEuclideangrad}
\end{align} 

Lastly, the Cholesky version of the Riemannian gradient descent (RGD) update in the descent direction $\xib_{\X_t\X_{t+1}}=-\alpha_t\grd^R f\left(\X_t\right)$ is given by 
\begin{align}
\X_{t+1} &= \Expxt\left(\xib_{\X_t\X_{t+1}}\right) = \B_t\exp(\cC_{\X_t}(\xib_{\X_t\X_{t+1}}))\B_t^\T\nonumber\\
&= \B_t \exp(-\alpha_t\B_t^{-1}\grd^R f\left(\X_t\right)\B_t^{-\T})\B_t^{\T}.\label{gdupdate}	
\end{align}
%which also implies that $\cC_{\X_t}(\X_{t+1}) = \exp(-\alpha_t \cC_{\X_t}(\grd^R \; f(\X_t)))$
Observe that since $\xib_{\X_t\X_{t+1}} \in \Sn$, it follows that $\cC_{\X_t}(\xib_{\X_t\X_{t+1}}) \in \Sn$ which ensures that $\X_{t+1} \in \Pn$. 

%\col{\begin{definition}[\textbf{Geodesically convex functions}]\label{paper.def1}
		%	A function	$f : \Pn\rightarrow \Rn$ is geodesically  convex (g-convex) if its restrictions to all geodesics are convex \cite[p. 64]{rapcsak1997smooth}.
		%\end{definition}
		%Definition \ref{paper.def1} implies that, for $0\leq \lambda\leq 1$, the following inequalities hold for every geodesic $\gamma(\lambda)$ joining the two arbitrary points $\X, \Y \in \Pn$:
		%\begin{align}
		%	f(\gamma(\lambda)) &\leq (1-\lambda)f(\X)+\lambda f(\Y).
		%\end{align}Removed if not used later, just cite the relevant work.}

\section{Riemannian subspace descent algorithm}\label{rsd_algo}
In this section, we describe the proposed Riemannian subspace descent algorithm and show how it supports greedy direction selection while keeping the per-iteration cost to $\O(n^2)$ flops. Different from the classical RGD in \eqref{gdupdate}, subspace descent algorithms update the iterate along a low-dimensional subspace, e.g., along one or a few coordinates, rather than moving along the full Riemannian gradient, thereby reducing the computational burden at each iteration. In the present case, we express the (transformed) Riemannian gradient as a sum of rank-one eigen-components and then update the iterate using only one selected component at each step. This decomposition allows us to identify the most promising descent component at low computational cost and efficiently maintain $\X_t$, $\X_t^{-1}$, and the Cholesky factor $\B_t = \cL(\X_t)$ via rank-one updates. We remark that the proposed approach of selecting a direction from a structured set of candidate directions differs from the subspace/coordinate descent schemes in  \cite{celledoni2008descent, shalit2014coordinate, gao2018new, gutman2023coordinate} which typically fix a basis for $T_\X\Pn$ and then choose the update direction by projecting the Riemannian gradient onto the resulting subspace. The method is, however, closely related to  \cite{darmwal2023low}, and we discuss this connection at the end of the section. 

\subsection{Subspace selection}
The subspace selection is motivated by the Spectral theorem: any symmetric matrix $\G$ admits an orthogonal eigen-decomposition $\G = \sum_{i=1}^n \lambda^{(i)} \u^{(i)}(\u^{(i)})^\T$. In other words, $\G$ can be expressed as a sum of $n$ rank-one terms $\u^{(i)}(\u^{(i)})^\T$ that are orthogonal in the Euclidean inner product, i.e., $\tr{\u^{(i)}(\u^{(i)})^\T\u^{(j)}(\u^{(j)})^\T} = 0$ for $i\neq j$ and 1 for $i=j$. These properties suggest that eigen-decomposition can be used to obtain subspaces along which an iterate may be updated. Specifically, we consider the eigen-decomposition of the transformed Riemannian gradient $\cC_{\X_t}(\grd^R \; f(\X_t))$, which appears within the exponential in \eqref{gdupdate}: 
\begin{align*}
	\cC_{\X_t}(\grd^R f(\X_t)) &\eqtext{\eqref{RiemanEuclideangrad}} \B_t^\T \grd f(\X_t) \B_t = \sum_{i=1}^n \lambda_t^{(i)}\x_t^{(i)}(\x_t^{(i)})^\T
\end{align*}
where $(\lambda_t^{(i)}, \x_t^{(i)})$ is an eigenpair. This eigen-decomposition induces the following Riemannian orthogonal decomposition of $\grd^R f(\X_t)$:
\begin{align}
	\grd^R f(\X_t) = \sum_{i=1}^n \lambda_t^{(i)}\H_t^{(i)}
\end{align}
where the rank-one tangent directions $\H_t^{(i)} := \B_t\x_t^{(i)}(\x_t^{(i)})^\T\B_t^\T$ are orthonormal with respect to the affine-invariant metric \eqref{metric} since
\begin{align}
	\langle \H_t^{(i)}, &\H_t^{(j)}\rangle_{\X_t} = \tr{\cC_{\X_t}(\H_t^{(i)})\cC_{\X_t}(\H_t^{(j)})} \nonumber\\
	&=\tr{\x_t^{(i)}(\x_t^{(i)})^\T\x_t^{(j)}(\x_t^{(j)})^\T} = ((\x_t^{(i)})^\T\x_t^{(j)})^2
\end{align}
which is one for $i = j$ and zero otherwise. Consequently, if $\grd^Rf(\X_t) \neq \zero$, then $-\lambda_t^{(i)}\H_t^{(i)}$ for any $\lambda_t^{(i)}\neq 0$ is a valid descent direction since
\begin{align*}
	Df_{\X_t}(-\lambda_t^{(i)}\H_t^{(i)}) &= -\langle  \sum_{j=1}^n \lambda_t^{(j)}\H_t^{(j)}, \lambda_t^{(i)}\H_t^{(i)}\rangle_{\X_t} \\
	&=-(\lambda_t^{(i)})^2 < 0.
\end{align*}
Indeed, since the magnitude of the directional derivative is exactly $(\lambda_t^{(i)})^2$, choosing the candidate direction corresponding to the dominant eigenvalue, i.e., one with the largest magnitude, results in the steepest first-order decrease. If $(\lambda_t^{(1)},\x_t^{(1)})$ denotes the dominant eigenvalue-eigenvector pair, then the proposed greedy subspace descent update can be written as
\begin{align}
	\X_{t+1} = \B_t\exp\left(-\beta_t\lambda_t^{(1)} \x_t^{(1)}(\x_t^{(1)})^\T\right)\B_t^\T \label{gsupdate}
\end{align}
where $\beta_t > 0$ is the step-size. This corresponds to choosing $\xib_{\X_t\X_{t+1}} = -\beta_t\lambda_t^{(1)}\H_t^{(1)}$. We next discuss the efficient implementation of \eqref{gsupdate} and comment on its computational complexity. 

\subsection{Efficient Updates}
In the current form, naive implementation of the update in \eqref{gsupdate} is still challenging as it seems to involve several operations whose complexity is generally $\O(n^3)$: matrix exponential, Cholesky factorization, eigenvalue decomposition, and dense matrix multiplications. Even with $\X_t$ and its Cholesky factor $\B_t$ available, forming the transformed Riemannian gradient $\B_t^\T\grd f(\X_t) \B_t$ is not generally possible in $\O(n^2)$ time. Here, we show that \eqref{gsupdate} can be written as a rank-one  update, which allows all required intermediates to be maintained via low-rank updates and hence implemented with $\O(n^2)$ per-iteration cost. 

%For the purpose of this section, we will assume that given $\X_t = \B_t\B_t^\T$, the the quantity $\B_t^\T\grd f(\X_t) \B_t$ can be calculated in $\O(n^2)$ flops. 

The key observation in \eqref{gsupdate} is that the matrix exponential of a rank-one matrix is easy to calculate, and yields the modified rank-one update rule \cite[Example 1.2.5]{higham2008functions}:
\begin{align}
	\X_{t+1} &= \X_t + \left[\exp(-\lambda_t^{(1)}\beta_t)-1\right]\B_t\x_t^{(1)}(\x_t^{(1)})^\T\B_t^\T \label{xupdate}
\end{align}
which, given $\B_t$ and $\x_t$, requires only $O(n^2)$ flops. Next, the rank-one modification in \eqref{xupdate} allows the Cholesky factor to be updated efficiently. In particular, $\B_{t+1}$ can be obtained from $\B_t$ using only $O(n^2)$ flops via a standard rank-one Cholesky update procedure \cite[Sec. 3]{gill1974methods}, also available as MATLAB function \texttt{cholupdate}. 

Calculation of the dominant eigenvalue-eigenvector pair is accomplished using the power method \cite{demmel1997applied}. Starting with an random unit norm $\y_t^{(0)} \in \Rn^n$, the power iterations take the form: 
\begin{align}
\y_t^{(i)}=\frac{\B_t^{\T}\grd f(\X_t)\B_t\y_t^{(i-1)}}{\|\B_t^{\T}\grd f(\X_t) \B_t\y_t^{(i-1)}\|_2}
\end{align}
for $i =1, 2, \ldots$. We terminate the method after $r$ iterations and use $\y_t = \y_t^{(r)}$ as an approximation of $\x_t$ in \eqref{xupdate}. The corresponding approximate eigenvalue can be calculated using the Rayleigh quotient as $\lambda_t = \y_t^\T\B_t^\T\grd f(\X_t)\B_t\y_t$. Hence, the approximate descent direction becomes
\begin{align}\label{xib}
	\xib_{\X_t\X_{t+1}}= -\beta_t\lambda_t \B_t\y_t\y_t^\T\B_t^\T
\end{align}
Here, observe that if we can apply $\grd f(\X_t)$ to a vector in $\O(n^2)$ flops, each power iteration can be implemented using only three $\O(n^2)$ operations: namely $\v_t^{(i)} = \B_t\y_t^{(i)}$, $\w_t^{(i)} = \grd f(\X_t) \v_t^{(i)}$, and $\tilde{\y}_t^{(i)} = \B_t^\T\w_t^{(i)}$, followed by normalization. Hence, the overall per-iteration complexity of the proposed algorithm is $\O(rn^2)$. The pseudo-code of the complete algorithm is summarized in Alg. \ref{sd_algo}.

\begin{algorithm}
	\caption{Rank-one Riemannian Subspace Descent (R1RSD) algorithm}\label{sd_algo}
	\begin{algorithmic}[1]
		\renewcommand{\algorithmicrequire}{\textbf{Input:}}
		\REQUIRE $\X_0 = \B_0\B_0^\T$, $T$, $\{\beta_t\}$
		\FOR {$t = 0$ to $T-1$}
		\STATE Calculate $\grd\; f(\X_t)$
		\STATE Draw $\u_t \sim \cN(\zero,\I_n)$ and set $\y_t^{(0)} = \u_t/\norm{\u_t}$
		\FOR {$i = 1$ to $r$}
		\STATE $\y_t^{(i)}=\frac{\B_t^{\T}\grd f(\X_t)\B_t\y_t^{(i-1)}}{\|\B_t^{\T}\grd f(\X_t) \B_t\y_t^{(i-1)}\|_2}$
		\ENDFOR
		\STATE Set $\y_t=\y_t^{(r)}$
		\STATE $\lambda_t=\y_t^{\T}\B_t^{\T}\grd\; f(\X_t)\B_t\y_t$
		\STATE $\X_{t+1} =\X_t +\left[\exp(-\lambda_t\beta_t)-1\right]\B_t\y_t\y_t^{\T}\B_t^{\T}$
		%\STATE $\X_{t+1}^{-1}= \X_t^{-1} + \left[\exp(\lambda_t\beta_t)-1\right]\B_t^{-\T}\y_t\y_t^{\T}\B_t^{-1}$
		\STATE Update $\B_t$ (e.g. using \texttt{cholupdate} in MATLAB)
		\ENDFOR
		\RETURN $\X_T$ 
	\end{algorithmic} 
\end{algorithm} 

\subsection{Calculation of $\grd\; f(\X_t)\v_t^{(i)}$}
In this section, we specify the class of functions $\cF$ for which the gradient-vector product $\grd\; f(\X_t)\v_t^{(i)}$ can be computed in $\O(n^2)$ time for any SPD iterate $\X_t$ and vector $\v_t^{(i)}$. We will maintain intermediate variables and matrix-vector products so as to avoid matrix-matrix multiplications required to explicitly form $\grd f(\X_t)$. The key is to restrict $f$ to compositions of basic matrix operations that (a) preserve symmetry and positive definiteness where required, and (b) admit efficient low-rank updates and matrix-vector application. 

To this end, we represent $f$ using a composition of several elementary layers. Specifically, we start with some tuple containing $\X$ and constant matrices $\A$, $\B$, $\C$, etc.: 
\begin{align}
	\M^{(0)}(\X) := (\X,\A,\B, \C, \ldots)
\end{align}
and apply $m$ compositional operations of the form 
\begin{align}
\M^{(\ell)}(\X) &= \cT^{(\ell)}(\M^{(\ell-1)}(\X)), &	\ell = 1, \ldots, m.
\end{align}
Here, the operation $\cT^{(\ell)}$ takes $\M^{(\ell-1)}(\X)$ as input and produces an output tuple $\M^{(\ell)}(\X)$, whose elements can be obtained by copying, transposing, adding, multiplying, or inverting elements of the input tuple. Specifically, the $j$-th element of $\M^{(\ell)}(\X)$ is given by one of the following operations ($i$ and $k$ are indices of the elements of $\M^{(\ell-1)}(\X)$): 
\begin{itemize}[leftmargin=*]
	\item Scaling: $\M_j^{(\ell)} = c\M_i^{(\ell-1)}$;
	\item Scaling and Transpose:  $\M_j^{(\ell)} = c(\M_i^{(\ell-1)})^{\T}$;
	\item Sum: $\M_j^{(\ell)} = \sum_i c_i\M_i^{(\ell-1)}(\X)$;
	\item Product: $\M_j^{(\ell)} = c_j\prod_q \M_{i_q}^{(\ell-1)}(\X)$ where $i_q$ are indices of the elements of $\M^{(\ell-1)}(\X)$; and
	\item Inverse: $\M_j^{(\ell)} = (\M_i^{(\ell-1)})^{-1}$ where $\M_i^{(\ell-1)} \succ \zero$; 
\end{itemize}
Finally, we apply scalar functions to the $J$ components of $\M^{(m)}(\X)$, so as to form 
\begin{align}
	f(\X) = \sum_{j=1}^J f_j(\M_j^{(m)}(\X)) \label{fjx}
\end{align}
where $f_j$ is one of the following functions (a) trace, (b) squared Frobenius norm, or (c) log-det if $\M_j^{(m)}(\X) \succ \zero$. As a simple example, consider the NME $f(\X) = \norm{\R(\X)}_F^2$ where $\R(\X) = \X+\A^\T\X^{-1}\A-\Q$, for which the different layers are given by:
\begin{align}
	\M^{(0)}(\X) &= (\X, \A, \Q) \\
	\M^{(1)}(\X) &= (\X, \A^\T, \X^{-1}, \A, -\Q) \\
	\M^{(2)}(\X) &= (\X,\A^\T\X^{-1}\A ,- \Q) \\
	\M^{(3)}(\X) &= (\X + \A^\T\X^{-1}\A - \Q) \\
	f(\M^{(3)}(\X)) & = \|\M^{(3)}(\X)\|_F^2.
\end{align}

Given the compositional form, we can find the gradient of $f$ with respect to $\X$ using chain rule to each layer, and write the full gradient as sum of $\tilde{J}$ terms: 
\begin{align}
	\grd f(\X) &= \sum_{j=1}^{\tilde{J}} \tilde{\M}_j^{(\tilde{m})}(\X) \\
	\tilde{\M}^{(\tilde{m})}(\X) &= \tilde{\cT}^{(\tilde{m})}(\tilde{\cT}^{(\tilde{m}-1)}(\ldots\tilde{\cT}^{(0)}(\M^{(0)}(\X))))
\end{align}
so that the gradient graph has layers $\tilde{\M}^1, \ldots, \tilde{\M}^{(\tilde{m})}$, and the components of the last layer are summands of $\grd f(\X)$. We note that each layer in the gradient function is also formed using the same primitive operations, namely sum, product, transpose, and inverse. Hence, we obtain the product as
\begin{align}
	\grd f(\X)\v = \sum_{j=1}^{\tilde{J}} \tilde{\M}_j^{(\tilde{m})}(\X)\v
\end{align}
where each term is evaluated using only matrix-vector products and application of inverses, if needed, that must be separately maintained using low-rank updates. For the NME example earlier, we have that
\begin{align}
	\grd f(\X) = 2\R(\X) - 2\X^{-1}\A\R(\X)\A^\T\X^{-1}
\end{align}
for which the different layers are given by:
\begin{align}
	\tilde{\M}^{(1)}(\X) &= (\X, \A^\T, \X^{-1}, \A, -\Q) \\
	\tilde{\M}^{(2)}(\X) &= (\X,\A^\T\X^{-1}\A ,- \Q, \X^{-1}\A, \A^\T\X^{-1}) \\
	\tilde{\M}^{(3)}(\X) &= (\R(\X),\X^{-1}\A, \A^\T\X^{-1}) \\
	\tilde{\M}^{(4)}(\X) &= (2\R(\X),-2\X^{-1}\A\R(\X)\A^\T\X^{-1}) 
\end{align}
As there is only one inverse operator, we only need to maintain:
\begin{align}
	\X_{t+1}^{-1}
	&= \X_t^{-1}
	+ \left[\exp\!\left(\lambda_t^{(1)} \beta_t\right) - 1\right]
	\B_t^{-\T} \x_t^{(1)} \big(\x_t^{(1)}\big)^\T \B_t^{-1} \label{x_inv_upate}
\end{align}
for each iteration, which incurs $\O(n^2)$ cost. Once $\X_{t+1}^{-1}$ is available, the gradient-vector product can be calculated using matrix-vector products alone, and without needing any other $\O(n^3)$ operations, as shown in Appendix \ref{nmestep}.

More generally, inverse nodes are the sole source of potential $\O(n^3)$ cost. By maintaining the outputs of all inverse nodes, via low-rank updates, every term $\tilde{\M}_j^{(\tilde m)}(\X)\v$ can be evaluated using only matrix–vector products, yielding an overall $\O(n^2)$ gradient–vector product.

To summarize, for any $f\in\cF$ specified by the primitive-layer construction above, the gradient admits a compositional structure built from the same primitives. The product $\nabla f(\X_t)\v_t^{(i)}$ is obtained by evaluating the gradient graph in a matrix--vector fashion, i.e., by applying each terminal summand $\tilde{\M}_j^{(\tilde m)}(\X_t)$ to $\v_t^{(i)}$ and summing the results. Outputs of inverse nodes are maintained across iterations via low-rank updates and the full gradient-vector product is calculated in $\O(n^2)$ time.

%%%%%%%%%%%%%%%%%%%%%%%%%%%%%%%%%%%%%%%%%%%%%%%%%%%%%%%%%%%%%%%%%%%%%%%%%%%%%%%%%%%%%%%%%%%%%%%%%%%%%%%%%%%%%%%%%%%%%%%%%%%%%%%%%%%%%%%%%%%%%%%%%%%%%%%%%%%%%%%%%%%%%%%%%%%%%%%%%%%%%%%%%%
%%%%%%%%%%%%%%%%%%%%%%%%%%%%%%%%%%%%%%%%%%%%%%%%%%%%%%%%%%%%%%%%%%%%%%%%%%%%%%%%%%%%%%%%%%%%%%%%%%%%%%%%%%%%%%%%%%%%%%%%%%%%%%%%%%%%%%%%%%%%%%%%%%%%%%%%%%%%%%%%%%%%%%%%%%%%%%%%%%%%%%%%%%

\section{Performance Analysis}
In this section, we analyze the performance of the proposed algorithm under certain regularity conditions on the objective function $f$. To this end, we will characterize the iteration complexity of Alg. \ref{sd_algo}, which is the number of iterations required to ensure that $\X_t$ is $\epsilon$-stationary on average. Compared to the analysis of RGD and subspace descent variants in the literature, the key complication here is that the random initialization required by the power method makes the subsequent steps as well as the overall behavior of the algorithm random. Hence, the performance analysis must involve expectations. We use $\Et{\cdot}$ to denote the expectation with respect to $\u_t$ (cf. Step 3 in Alg. \ref{sd_algo}) and $\EE[\cdot]$ to denote full expectation. We begin with stating the key assumptions, which are standard in the RGD and power method literature. 

%In this section, we establish the convergence of the proposed algorithm under certain regularity assumptions on the objective function $f$. Specifically, we will establish a $\O(\log(1/\epsilon))$ bound on the number of iterations required to ensure that $f(\X_t)-f(\X^{\star})\leq \epsilon$ for a specified $\epsilon > 0$, where $\X^\star$ is the optimal solution to \eqref{residual_norm}. 
	
\begin{assumption}\label{a-strong_convex}
	The function $f:\Pn \rightarrow \Rn$ is geodesically $\mu$-strongly convex, i.e., for any two arbitrary points $\X$, $\Y \in \Pn$, and  connecting  geodesic starting at $\X$ with $\gamma'(0) = \xib_{\X\Y}$, it holds that (\cite{zhang16PMLR}):
	\begin{align}
		f(\Y)&\geq  f(\X)+\langle \grd^R f(\X), \xib_{\X\Y}\rangle_{\X} + \frac{\mu}{2}\|\xib_{\X\Y}\|_{\X}^2.  \label{geodesic_strong_convx}
	\end{align}
\end{assumption}	
		
\begin{assumption}\label{a-smooth}
	The function $f : \Pn\rightarrow \mathbb{R}$  is geodesically $L$-smooth, i.e., for any two arbitrary points $\X$, $\Y \in \Pn$, and  connecting  geodesic starting at $\X$ with $\gamma'(0) = \xib_{\X\Y}$, it holds that (\cite{zhang16PMLR}):
	\begin{align}
		f(\Y) \leq f(\X) +\langle \grd^R f(\X), \xib_{\X\Y}\rangle_{\X} + \frac{L}{2}\|\xib_{\X\Y}\|_{\X}^2. \label{Lipschitz_cont_grad}
	\end{align}
\end{assumption}

Of these, the geodesic smoothness assumption is quite standard and applies to all the examples discussed so far. On the other hand, the geodesic strong convexity assumption is stronger. It clearly holds for the residual norm in the case of the Lyapunov function, but is difficult to verify for the other equations in Sec. \ref{intro}. The bounds will also depend on the initialization through $D_0:=f(\X_0)$. For the sake of convenience, the transformed gradient is denoted as
\begin{align}
	\P(\X) := \cL(\X)^\T\grd f(\X)\cL(\X)
\end{align}
for any $\X \in \Pn$, so that $\P(\X_t) = \B_t^\T\grd f(\X_t)\B_t$. The final assumption is standard for analyzing the power method. 

\begin{assumption}\label{gap}
	The spectral gap of $\P(\X)$ is non-trivial for all $\X$, i.e.,  $\frac{\abs{\lambda^{(2)}(\X)}}{\abs{\lambda^{(1)}(\X)}} \leq \rho < 1$ for some $\rho$ and all $\X\in \Pn$, where the eigenvalues of $\P(\X)$ arranged such that $\lambda^{(1)}(\X) > \ldots \geq \lambda^{(n)}(\X)$.
\end{assumption}

We remark the gap-free analysis of power method is also possible but requires stronger assumption of positive semi-definiteness of $\P(\X)$ \cite[Sec. 6]{martinsson2020randomized}, which is unlikely to hold for all $\X$ in the current setting. Hence, we resort to gap-dependent analysis, though Assumption \ref{gap} may be difficult to verify in practice. Before establishing the required results, we state the following preliminary lemma that lower bounds the approximate Rayleight quotient $\lambda_t^2$ obtained in Step 8 of Alg. \ref{sd_algo}. 

\begin{lemma}\label{lamt}
	Under Assumption \ref{gap} for $\P_t = \P(\X_t)$, after $r \geq \frac{1}{2}\log_{\frac{1}{\rho}}(8n)$ iterations of the power method, it holds that
	\begin{align}\label{lamteq}
	\Et{\lambda_t^2} &\geq \frac{1}{48}(\lambda_t^{(1)})^2.
	\end{align}
\end{lemma}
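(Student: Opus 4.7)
The plan is to reduce $\lambda_t$ to a ratio of quadratic forms in $\y_t^{(0)}$, control its deviation from $\lambda_t^{(1)}$ on a high-alignment event via the spectral gap, and lower bound the probability of that event through Paley--Zygmund. Since the power iteration produces $\y_t = \P_t^r\y_t^{(0)}/\|\P_t^r\y_t^{(0)}\|_2$, the Rayleigh quotient from Step~8 of Alg.~\ref{sd_algo} simplifies to $\lambda_t = [(\y_t^{(0)})^\T\P_t^{2r+1}\y_t^{(0)}]/[(\y_t^{(0)})^\T\P_t^{2r}\y_t^{(0)}]$. Letting $\{(\lambda_t^{(i)},\x_t^{(i)})\}_{i=1}^n$ be the eigenpairs of $\P_t$, expanding $\y_t^{(0)}=\sum_{i}c_i\x_t^{(i)}$ with $\sum_i c_i^2 = 1$, and writing $r_i := \lambda_t^{(i)}/\lambda_t^{(1)}$, this reduces to
\begin{align*}
\frac{\lambda_t}{\lambda_t^{(1)}} = \frac{c_1^2 + A}{c_1^2 + B},\qquad A := \sum_{i\geq 2}c_i^2\, r_i^{2r+1},\ B := \sum_{i\geq 2}c_i^2\, r_i^{2r},
\end{align*}
with $|A|\leq \rho^{2r+1}$ and $0\leq B\leq \rho^{2r}$ from Assumption~\ref{gap}.

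The hypothesis $r\geq \tfrac{1}{2}\log_{1/\rho}(8n)$ is equivalent to $\rho^{2r}\leq 1/(8n)$, so on the event $\cE := \{c_1^2\geq 1/(4n)\}$ both $|A|$ and $B$ are at most $c_1^2/2$, giving $c_1^2 + A \geq c_1^2/2$ and $c_1^2 + B \leq 3c_1^2/2$, and hence $(\lambda_t/\lambda_t^{(1)})^2\geq 1/9$ pointwise on $\cE$. For the probability of $\cE$, note that $\u_t\sim\cN(\zero,\I_n)$ is drawn independently of $\X_t$, so conditionally on $\X_t$ the direction $\y_t^{(0)}$ is uniform on the unit sphere and $c_1^2\sim \mathrm{Beta}(1/2,(n-1)/2)$; this yields $\Et{c_1^2} = 1/n$ and $\Et{c_1^4} = 3/(n(n+2))$, and applying the Paley--Zygmund inequality to $c_1^2$ at threshold $1/4$ gives $\Pr_t(\cE)\geq (3/4)^2(n+2)/(3n)\geq 3/16$. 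Combining through the tower property produces $\Et{\lambda_t^2}\geq \Pr_t(\cE)\cdot (\lambda_t^{(1)})^2/9 \geq (\lambda_t^{(1)})^2/48$, which is the claim.

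The main obstacle is choosing constants so the deterministic and probabilistic halves meet cleanly: the Paley--Zygmund threshold $1/(4n)$ must dominate $2\rho^{2r}$, which is what fixes the iteration count in the hypothesis, and the final constant $1/48 = (3/16)\cdot(1/9)$ arises from multiplying the two pieces. A secondary subtlety is possibly negative eigenvalues of $\P_t$, which I circumvent by bounding $|A|$ (rather than $A$) and by using $r_i^{2r}\geq 0$ to keep $B\geq 0$, so that $(c_1^2+A)/(c_1^2+B)$ stays positive on $\cE$.
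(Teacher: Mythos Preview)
Your proof is correct and follows the same high-level strategy as the paper: establish a deterministic lower bound on $(\lambda_t/\lambda_t^{(1)})^2$ on the event that the initial alignment $c_1^2$ is not too small, then lower bound the probability of that event via Paley--Zygmund using $c_1^2\sim\mathrm{Beta}(1/2,(n-1)/2)$.

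The execution differs in one place worth noting. For the deterministic step, the paper invokes the classical power-method estimate \cite[Thm.~8.3.1]{golub2013matrix}, $|\lambda_t-\lambda_t^{(1)}|\leq 2|\lambda_t^{(1)}|\rho^{2r}\tan^2(\theta_0)$, on the event $\{c_1^2\geq 1/(2n)\}$ (so $\tan^2\theta_0\leq 2n$), obtaining $|\lambda_t|\geq |\lambda_t^{(1)}|/2$ and event probability $\geq 1/12$. You instead write $\lambda_t/\lambda_t^{(1)}=(c_1^2+A)/(c_1^2+B)$ explicitly and bound it elementarily on the event $\{c_1^2\geq 1/(4n)\}$, obtaining $\lambda_t/\lambda_t^{(1)}\geq 1/3$ and event probability $\geq 3/16$. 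Your route is self-contained (no external citation) and handles possibly negative eigenvalues transparently through the $|A|$ bound and $B\geq 0$; the paper's route is shorter but leans on a textbook result. It is a pleasant coincidence that the two different threshold choices produce the identical constant $\tfrac{1}{12}\cdot\tfrac{1}{4}=\tfrac{3}{16}\cdot\tfrac{1}{9}=\tfrac{1}{48}$.
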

\begin{IEEEproof}
	Writing $\y_t^{(0)} = \sum_i c_i \x_t^{(i)}$ for some coordinates $c_i$, we have that \cite[Thm. 8.3.1]{golub2013matrix}:
	\begin{align}
		\abs{\lambda_t-\lambda_t^{(1)}}\leq \max_{2\leq i \leq n} \abs{\lambda_t^{(1)} - \lambda_t^{(i)}} \rho^{2r}\tan^2(\theta_0)
	\end{align}
	where $\tan^2(\theta_0) = \frac{1-c_1^2}{c_1^2}$. Since $\abs{\lambda_t^{(1)} - \lambda_t^{(i)}} \leq 2\abs{\lambda_t^{(1)}}$, we have
	\begin{align}
		\abs{\lambda_t} \geq \abs{\lambda_t^{(1)}} - \abs{\lambda_t - \lambda_t^{(1)}} \geq \abs{\lambda_t^{(1)}}\left[1-2\tan^2(\theta_0)\rho^{2r}\right]
	\end{align}
	which is vacuous if the right-hand side is non-positive. To convert this into a bound in expectation, let us consider the random variable $\sZ$ which takes the value one when $c_1^2 \geq \frac{1}{2n}$ and zero otherwise. If $\sZ= 1$, we have that $\tan^2(\theta_0) = \frac{1}{c_1^2}-1 \leq 2n$, which yields the bound
	\begin{align}\label{lamtbound1}
		\Et{\lambda_t^2} \geq \Et{\sZ}(\lambda_t^{(1)})^2(\max\{0,1-4n\rho^{2r}\})^2.
	\end{align}
	Finally to bound $\Et{\sZ}$, we note that $c_1^2 \sim \text{Beta}(\frac{1}{2},\frac{n-1}{2})$ \cite{frankl1990some} so that $\Et{c_1^2} = \frac{1}{n}$ and $\Et{c_1^4} = \frac{3}{n(n+2)}$. Application of the Paley–Zygmund inequality \cite{petrov2007lower} yields $\mathbb{P}[c_1^2 \geq \frac{1}{2n}] \geq \frac{1}{4}\frac{n+2}{3n} \geq \frac{1}{12}$, which upon substituting in \eqref{lamtbound1} gives
	\begin{align}
		\Et{\lambda_t^2} \geq	\frac{1}{12}(\lambda_t^{(1)})^2(\max\{0,1-4n\rho^{2r}\})^2.
	\end{align} 
	Finally, for $r = \frac{1}{2}\log_{\frac{1}{\rho}}(8n)$, we get $4n\rho^{2r} = \frac{1}{2}$, which results in the desired bound. 
\end{IEEEproof}

\subsection{Minimizing geodesically smooth functions}
We first consider the general case of geodesically $L$-smooth but possibly non-convex functions. In this case, $f$ may have multiple stationary points that may not be a global minimum. However, as with all first-order algorithms, the proposed subspace descent algorithm can only guarantee an $\epsilon$-stationary point, i.e., one that satisfies: 
\begin{align}\label{epsstat}
	\EE\left[\min_{0 \leq t \leq T-1} \norm{\grd^Rf(\X_t)}_{\X_t}\right] \leq \epsilon.
\end{align} 
We know that unless $f(\X_t)$ is close to zero, the iterate $\X_t$ is not globally optimum and does not solve \eqref{eqn_form}. In such case, we may try again by restarting the algorithm from a different random initialization. The following theorem characterizes the iteration complexity of the proposed subspace descent algorithm. 

\begin{theorem}\label{thm1}
 	Under Assumptions \ref{a-smooth} and \ref{gap}, the proposed algorithm yields an $\epsilon$-stationary point in $\O(\frac{nL}{\epsilon^2})$ iterations and each iteration incurs $\O(n^2\log(n))$ flops. 
 \end{theorem}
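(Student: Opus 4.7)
The plan is to combine the geodesic smoothness descent lemma with the eigenvalue lower bound from Lemma~\ref{lamt}, and then relate the dominant eigenvalue of $\P(\X_t)$ to the full Riemannian gradient norm so that a standard telescoping argument yields the iteration bound.

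First I would apply Assumption \ref{a-smooth} along the geodesic joining $\X_t$ to $\X_{t+1}$ with tangent $\xib_{\X_t\X_{t+1}}=-\beta_t\lambda_t\B_t\y_t\y_t^\T\B_t^\T$ from \eqref{xib}. Since $\y_t$ is unit-norm, a direct calculation using the congruence isometry gives $\|\xib_{\X_t\X_{t+1}}\|_{\X_t}^2=\beta_t^2\lambda_t^2$ and $\langle\grd^R f(\X_t),\xib_{\X_t\X_{t+1}}\rangle_{\X_t}=-\beta_t\lambda_t(\y_t^\T\P_t\y_t)=-\beta_t\lambda_t^2$. Choosing the constant step-size $\beta_t=1/L$ therefore yields the per-step decrease
\begin{align}
f(\X_{t+1}) \leq f(\X_t) - \tfrac{1}{2L}\lambda_t^2.
\end{align}

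Next I would take the conditional expectation $\Et{\cdot}$ with respect to the power-method seed $\u_t$. By Lemma~\ref{lamt}, after $r=\lceil\tfrac12\log_{1/\rho}(8n)\rceil=\O(\log n)$ power iterations we have $\Et{\lambda_t^2}\geq \tfrac{1}{48}(\lambda_t^{(1)})^2$. The crucial link to the Riemannian gradient norm is the identity
\begin{align}
\|\grd^R f(\X_t)\|_{\X_t}^2 = \tr{\P(\X_t)^2}=\sum_{i=1}^n (\lambda_t^{(i)})^2 \leq n(\lambda_t^{(1)})^2,
\end{align}
using $|\lambda_t^{(1)}|\geq |\lambda_t^{(i)}|$ for all $i$. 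Combining these gives
\begin{align}
\Et{f(\X_{t+1})} \leq f(\X_t) - \tfrac{1}{96nL}\|\grd^R f(\X_t)\|_{\X_t}^2.
\end{align}

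Taking full expectation, telescoping from $t=0$ to $T-1$, and using $f(\X_T)\geq 0$ (or any finite lower bound $f^\star$) I would obtain
\begin{align}
\sum_{t=0}^{T-1}\EE\!\left[\|\grd^R f(\X_t)\|_{\X_t}^2\right] \leq 96nL\, D_0,
\end{align}
and hence $\min_{0\leq t\leq T-1}\EE[\|\grd^R f(\X_t)\|_{\X_t}^2]\leq 96nLD_0/T$. An application of Jensen's inequality (and the trivial $\EE[\min_t\cdot]\leq\min_t\EE[\cdot]$) converts this into the bound in \eqref{epsstat}, which becomes $\leq\epsilon$ once $T=\O(nL D_0/\epsilon^2)$. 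For the per-iteration cost I would simply tally the operations already justified in Sec.~\ref{rsd_algo}: the $r=\O(\log n)$ power iterations each cost $\O(n^2)$ via the gradient-vector-product construction for $f\in\cF$, the rank-one updates of $\X_t$, $\X_t^{-1}$ (cf.~\eqref{x_inv_upate}) and $\B_t$ via \texttt{cholupdate} are $\O(n^2)$, and the Rayleigh quotient and step execution are $\O(n^2)$; summing gives $\O(n^2\log n)$ flops per iteration.

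The main obstacle, and also the source of the extra factor $n$ in the iteration count compared to full RGD, is handling the disparity between the single greedy eigen-direction selected by the power method and the full Riemannian gradient; this is where the spectral inequality $\|\grd^R f\|_{\X}^2\leq n(\lambda^{(1)})^2$ enters and cannot be tightened without additional spectral structure on $\P(\X)$. A secondary subtlety is that the power-method randomness must be propagated correctly through the descent lemma, which is why the analysis proceeds conditionally on the history and then takes the tower expectation before telescoping.
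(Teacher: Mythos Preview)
Your proposal is correct and follows essentially the same route as the paper: apply $L$-smoothness with $\beta_t=1/L$ to get $f(\X_{t+1})\leq f(\X_t)-\lambda_t^2/(2L)$, invoke Lemma~\ref{lamt} and the spectral bound $\|\grd^R f(\X_t)\|_{\X_t}^2=\sum_i(\lambda_t^{(i)})^2\leq n(\lambda_t^{(1)})^2$, then telescope and pass from the squared-norm bound to \eqref{epsstat} via Jensen. The only cosmetic differences are that the paper takes full expectation throughout rather than first conditioning on the history, and it averages over $t$ before minimizing (calling the final step ``Cauchy--Schwarz''), whereas you minimize first and use $\EE[\min_t\cdot]\leq\min_t\EE[\cdot]$; both routes yield the same $\sqrt{96nLD_0/T}$ bound.
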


\begin{proof}
	For the sake of simplicity, we will use the step size $\beta_t = 1/L$. We begin with using \eqref{Lipschitz_cont_grad} between $\X_t$ and $\X_{t+1}$ as well as the definition of $\xib_{\X_t\X_{t+1}}$ in \eqref{xib} to obtain
	\begin{align}
		&f(\X_{t+1}) - f(\X_t) \\
		&\leq \ip{\grd^Rf(\X_t)}{\xib_{\X_t\X_{t+1}}}_{\X_t} + \tfrac{L}{2}\|\xib_{\X_t\X_{t+1}}\|_{\X_t}^2 \nonumber\\
		&\hspace{-2mm}\leqtext{\eqref{metric},\eqref{RiemanEuclideangrad},\eqref{xib}} \hspace{-3mm}-\tfrac{\lambda_t}{L}\tr{\X_t\grd\; f(\X_t)\X_t\X_t^{-1}\B_t\y_t\y_t^\T\B_t^\T\X_t^{-1}} \nonumber\\
		&+ \tfrac{\lambda_t^2}{2L}\tr{\B_t\y_t\y_t^\T\B_t^{\T}\X^{-1}\B_t\y_t\y_t^\T\B_t^{\T}\X_t^{-1}}\\
		&= -\tfrac{\lambda_t}{L}\tr{\y_t^\T\B_t^\T\grd\; f(\X_t)\B_t\y_t}  + \tfrac{\lambda_t^2}{2L} = -\tfrac{\lambda_t^2}{2L}.\label{fdec}
	\end{align}
	That is, the per-iteration decrease in the function value is proportional to the square of the approximate Rayleigh quotient obtained from the power method.  
		
	Next, we bound the Riemannian gradient norm using \eqref{RiemanEuclideangrad}:
	\begin{align}
		&\|\grd^R f(\X_t)\|_{\X_t}^2 \eqtext{\eqref{metric},\eqref{RiemanEuclideangrad}} \tr{\grd f(\X_t) \X_t \grd f(\X_t)\X_t} \nonumber\\
		 &=\tr{(\B_t^\T\grd \; f(\X_t)\B_t)^2} = \sum\nolimits_i (\lambda_t^{(i)})^2 \leq n (\lambda_t^{(1)})^2 \label{gradbound}
		 %&\leqtext{\eqref{fdec}} 2nL(f(\X_t) - f(\X_{t+1})). 
	\end{align}
	Taking expectation and applying the bound in Lemma \ref{lamt} for $r \geq \frac{1}{2}\log_{\frac{1}{\rho}}(8n)$, we obtain
	\begin{align}
		\EE\|\grd^R f(\X_t)\|_{\X_t}^2 &\leq n \Ex{(\lambda_t^{(1)})^2} \leqtext{\eqref{lamteq}} 48 \Ex{\lambda_t^2} \\
		&\hspace{-1cm}\leqtext{\eqref{fdec}} 96nL(\Ex{f(\X_t)} - \Ex{f(\X_{t+1})}). \label{gradbound2}
	\end{align}
	Therefore, taking sum over $t = 1, \ldots, T$, dividing by $T$, and using the fact that $f(\X_T) \geq 0$, we obtain
	\begin{align}
		\Ex{\min_t \|\grd^R f(\X_t)\|_{\X_t}^2} &\leq \frac{1}{T}\sum_{t=0}^{T-1} \EE\|\grd^R f(\X_t)\|_{\X_t}^2 \\
		&\leq \tfrac{96nLD_0}{T}
	\end{align}
	and subsequently from Cauchy-Schwarz inequality, we get
	\begin{align}
		\Ex{\min_t \|\grd^R f(\X_t)\|_{\X_t}} \leq \sqrt{\frac{96nLD_0}{T}}.
	\end{align}
	Equating the expression on the right with $\epsilon$ yields the required iteration complexity. We note that for $r = \Theta(\log(n))$, the total flop count of the proposed algorithm becomes $\O(\frac{n^3\log(n)L}{\epsilon})$ since each iteration incurs $\Theta(n^2\log(n))$ flops. 
\end{proof}

The result in Thm. \ref{thm1} improves upon the $\O(\frac{\abs{\cI}L}{\epsilon^2})$ iteration complexity of RCD algorithm in \cite[Thm. 4.1]{han2024riemannian} since the cardinality of the index set is $\abs{\cI} = \O(n^2)$. Likewise, the total flop count of Alg. \ref{sd_algo} is $\O(n^3\log(n))$ which is worse than the RGD flop count of $\O(n^3)$ but better than the $\O(n^4)$ count of RCD algorithm. 

\subsection{Minimizing geodesically strongly convex and smooth functions}
When $f$ is geodesically strongly convex, \eqref{residual_norm} has a unique stationary point that solves \eqref{eqn_form}, i.e., $f(\X^\star) = 0$. In this case, it suffices to characterize the number of iterations required to ensure that $f(\X_T)\leq \epsilon$. The following theorem provides required iteration complexity bound in terms of $\kappa = L/\mu$. 
\begin{theorem}\label{thm2}
	Under Assumption \ref{a-strong_convex}-\ref{a-smooth}, the proposed algorithm has an iteration complexity of $\O\left(n\kappa\log\left(\frac{D_0}{\epsilon}\right)\right)$.
\end{theorem}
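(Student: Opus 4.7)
The plan is to combine the per-iteration descent inequality already established in the proof of Theorem~\ref{thm1} with a Riemannian Polyak--{\L}ojasiewicz (PL) inequality derived from geodesic strong convexity, and then iterate to obtain linear convergence.

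First, I would reuse the two key ingredients from the proof of Theorem~\ref{thm1}, namely $f(\X_{t+1})-f(\X_t)\leq -\lambda_t^2/(2L)$ in \eqref{fdec} and the eigenvalue bound $\|\grd^R f(\X_t)\|_{\X_t}^2 \leq n(\lambda_t^{(1)})^2$ in \eqref{gradbound}. Taking conditional expectation $\Et{\cdot}$ and applying Lemma~\ref{lamt} with $r \geq \tfrac{1}{2}\log_{1/\rho}(8n)$ gives
\begin{align}
\Et{f(\X_{t+1})} - f(\X_t)
\leq -\tfrac{1}{2L}\Et{\lambda_t^2}
\leq -\tfrac{(\lambda_t^{(1)})^2}{96L}
\leq -\tfrac{\|\grd^R f(\X_t)\|_{\X_t}^2}{96nL}.
\end{align}

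Next, I would derive the Riemannian PL inequality from Assumption~\ref{a-strong_convex}. Minimizing the right-hand side of \eqref{geodesic_strong_convx} over $\xib_{\X\Y}\in T_\X\Pn$ (the minimum is attained at $\xib = -\grd^R f(\X)/\mu$) and using that $\Pn$ is a Hadamard manifold so the exponential map is surjective, yields
\begin{align}
f(\X) - f(\X^\star) \leq \tfrac{1}{2\mu}\|\grd^R f(\X)\|_{\X}^2, \qquad \X\in\Pn.
\end{align}
Because $f$ is geodesically strongly convex, \eqref{residual_norm} has a unique minimizer $\X^\star$ with $f(\X^\star)=0$, so $D_0 = f(\X_0) - f(\X^\star)$. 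Combining the two displays above and taking full expectation yields the one-step contraction
\begin{align}
\Ex{f(\X_{t+1})} \leq \left(1 - \tfrac{\mu}{48nL}\right)\Ex{f(\X_t)} = \left(1 - \tfrac{1}{48n\kappa}\right)\Ex{f(\X_t)}.
\end{align}

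Finally, unrolling this recursion across $T$ iterations and using $1-x\leq e^{-x}$, I obtain $\Ex{f(\X_T)} \leq \exp\!\left(-T/(48n\kappa)\right) D_0$. Setting the right-hand side equal to $\epsilon$ and solving for $T$ yields the stated $\O(n\kappa \log(D_0/\epsilon))$ bound. The main subtlety, and the only place where the present analysis differs substantively from standard Riemannian gradient descent under strong convexity, is the factor $n$ introduced in \eqref{gradbound}, which reflects that a single rank-one update captures only the dominant component of the $n$-term spectral decomposition of the transformed gradient; the power method guarantee from Lemma~\ref{lamt} is what turns this into an unbiased-in-expectation contraction. Nothing else in the argument requires additional work beyond verifying that $f(\X^\star)=0$ is indeed attained and that the PL inequality extends to the Hadamard manifold $\Pn$, both of which are standard.
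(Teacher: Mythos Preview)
Your proof is correct and follows the same strategy as the paper: reuse the descent inequality \eqref{fdec}, Lemma~\ref{lamt}, and the bound \eqref{gradbound} from the proof of Theorem~\ref{thm1}, combine with a PL-type inequality derived from geodesic strong convexity, and unroll for linear convergence. The only difference is that you obtain the PL inequality by minimizing the strong-convexity lower bound over the tangent space (yielding $f(\X)-f(\X^\star)\leq \tfrac{1}{2\mu}\|\grd^R f(\X)\|_\X^2$), whereas the paper applies Cauchy--Schwarz to reach the looser $f(\X)\leq \tfrac{2}{\mu}\|\grd^R f(\X)\|_\X^2$; consequently your contraction factor $(1-\tfrac{1}{48n\kappa})$ is a factor of four tighter than the paper's $(1-\tfrac{1}{192n\kappa})$, but both give the same $\O(n\kappa\log(D_0/\epsilon))$ complexity.
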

\begin{proof}
	Applying \eqref{geodesic_strong_convx} to $f$ from $\X \in \Pn$ to $\X^\star$ and using the fact that $f(\X) \geq f(\X^\star) = 0$, we obtain
	\begin{align}
		\frac{\mu}{2}\|\xib_{\X\X^{\star}}\|_{\X}^2 &\leq f(\X^{\star}) - f(\X) - \ip{\grd^R f(\X)}{\xib_{\X\X^{\star}}}_{\X}\nonumber\\
		&\leq \norm{\grd^R f(\X)}_\X\norm{\xib_{\X\X^{\star}}}_{\X}\label{grad_bound_intermediate}
	\end{align}
	where we have used the Cauchy-Schwarz inequality. Hence, if $\X \neq \X^\star$, it follows that $\|\grd^R f(\X)\|_{\X} \geq  \frac{\mu}{2}\|\xib_{\X\X^\star}\|_{\X}$. Substituting back into \eqref{grad_bound_intermediate} and using $f(\X^\star) = 0$, we obtain
	\begin{align}
		\frac{2}{\mu}\|\grd^R f(\X)\|_{\X}^2 \geq  f(\X)\label{grad_dominate}
	\end{align}
	Writing \eqref{grad_dominate} for $\X = \X_t$, taking expectations, and combining with \eqref{gradbound2}, we obtain
\begin{align}
	\Ex{f(\X_{t+1})}  &\leq \left(1-\tfrac{\mu}{192nL}\right)\Ex{f(\X_t)} \\
	&\leq \left(1-\tfrac{\mu}{192nL}\right)^{t+1}D_0 \label{linconv}
\end{align}
Hence, to ensure that $\Ex{f(\X_T)} \leq \epsilon$, we need $T = \O(n \kappa \log(D_0/\epsilon))$. 
\end{proof}
Observe that Thm. \ref{thm2} has the same dependence on $n$ as Thm. \ref{thm1}, but improved dependence on $\epsilon$. The overall flop count of $\O(n^3\kappa \log(nD_0/\epsilon))$ is slightly worse than that of classical RGD, but the proposed algorithm has a lower per-iteration complexity allowing it to be applied to larger scale problems. We also note that the RRSD variants proposed in \cite{darmwal2023low} also incur a per-iteration complexity of $\O(n^3)$ when applied to \eqref{residual_norm}. In terms of total flop counts, the faster RRSD-multi algorithm in \cite{darmwal2023low} requires $\O(n^4 \log(D_0/\epsilon))$ flops in the geodesically strongly convex and smooth case. 

\section{Numerical experiments}
In this section, we compare the numerical performance of the proposed algorithm against state-of-the-art algorithms in solving CARE, DARE,  and an NME from \cite{huang2018structure}:
\begin{align}
	&\A^{\T}\X+\X\A-\X\G\X+\H=0  \quad  &(\text{CARE}) \label{care}\\
	&\X-\A^\T\X(\I+\G\X)^{-1}\A-\Q=0 \quad  &(\text{DARE})\label{dare}\\
	&\X+\A^{\T}\X^{-1}\A=\Q   \quad  &(\text{NME}).\label{nme}
\end{align}
Of these CARE frequently appears in optimal control problems~\cite{liberzon2012calculus, troltzsch2010optimal}. Consider, for instance, a standard infinite-dimensional optimal control problem from \cite{benner2016inexact}, but instead using an energy-based cost function 
\begin{align}\label{carecost}
	J &= \min \int_{0}^{\infty} \left( \gamma \int_{\Omega_0} \tilde{\x}^2(\xi,t)\,d\xi \right) + u^2(t)\,dt.
\end{align}
Such cost functionals are common in physical partial differential equation (PDE) models, such as in optimal control for thermal management in semiconductor devices~\cite{wang2013modeling, zanini2009control, salvi2021review}, control of pollutant emissions from chimneys~\cite{manzoni2021optimal}, and incompressible flow problems~\cite{bansch2012stabilization}, among other applications.

The performance of all these algorithms is compared with the state-of-the-art Burer-Monteiro factorization (BMFC) algorithm from \cite{han2024riemannian} for subspace descent. We also compare with the standard structure-preserving doubling algorithm (SDA)~\cite{huang2018structureI} and the MATLAB built-in \texttt{icare} function. Unlike the subspace descent methods, both SDA and \texttt{icare} are very fast for small-scale problems ($n \ll 5000$) but fail to run for large $n$. Similarly, for DARE and NME, full-update state-of-the-art algorithms, such as fixed-point, SDA, and MATLAB function \texttt{idare}, do not scale to large $n$ and are therefore not included in the results. We remark that if $\X$ is sparse or low-rank, we can solve CARE more efficiently, such as using algorithms from \cite{li2002low, benner2009adi, benner2015matrix, benner2016inexact}. However, these algorithms are not efficient for the general case, such as that arising when solving CARE for the energy-based cost function considered in \eqref{carecost}.

The residual error of the matrix equations is used as the performance metric, while the number of iterations and the per-iteration wall-clock time serve as measures of algorithmic efficiency. All the runtime results, including per-iteration and total execution times, are reported for the same machine with 8 GB RAM. However, other experiments were conducted on systems with higher RAM as well. 

\subsection{Implementation details}
For the proposed as well as BMFC algorithms, we implement all the steps carefully so as to ensure that the per-iteration cost remains $\O(n^2)$. To this end, we utilized the Woodbury identity to perform rank-one updates and maintained appropriate inverse matrices where required. The details of the updates are provided in the Appendices \ref{stepcare}, \ref{nmestep}, and \ref{stepdare}. For the purpose of implementation, we write the update as $\X_{t+1} = \X_t +\alpha_t \B_t\y_t\y_t^\T\B_t^\T$ and directly tune  $\alpha_t$ via line search. Interestingly, for the proposed algorithm, in all the three problems, the low-rank structure of the updates allows us to select the step-sizes via exact line search. Specifically, it is possible to find the step size that minimizes the objective along the descent direction without significant additional effort.

Exact line search is also possible for the BFMC algorithm for solving CARE and NME, but not for DARE, where Armijo line search must instead be used. 
The Armijo line search method iteratively decreases the stepsize $\delta$ by a factor $p$, until the following sufficient decease condition is satisfied along the descent direction $\D$:
\begin{align}
	f(\Y_{t+1})\leq f(\Y_{t})+\beta\delta\;
	\tr{\grd f(\Y)^\T \D}
\end{align}
We tested the performance of BMFC for different values of $\beta$ and $p$. For the cyclic version of BMFC, varying $\beta \in \{1, 0.1, 0.01, 0.001\}$ showed no noticeable effect on convergence and hence we set $\beta = 0.01$. We then tested for $p \in \{1/2, 1/4, 1/8, 1/16, 1/32, 1/64, 1/128\}$ and found that $p = 1/8$ provides the best performance in terms of both total iterations and the number of Armijo line-search steps. For sampling
with replacement, the choice $\beta = 0.1$ and $p = 1/8$ yielded the best overall performance.

%\begin{table*}[t]
%	\centering
%	\setlength{\tabcolsep}{4pt}
%	\renewcommand{\arraystretch}{1.05}
%	\caption{R1RSD vs. BMFC Performance on CARE, DARE, and NME ($n=100$)}
%	\label{small_table_result_compare}
%	\begin{tabular}{l | ccc | ccc | ccc}
%		\toprule
%		& \multicolumn{9}{c}{Iterations} \\
%		\cmidrule(lr){2-10}
%		Algorithm
%		& \multicolumn{3}{c|}{CARE}
%		& \multicolumn{3}{c|}{DARE}
%		& \multicolumn{3}{c}{NME} \\
%		\cmidrule(lr){2-4}\cmidrule(lr){5-7}\cmidrule(lr){8-10}
%		& $f<10^{-2}$ & $f<10^{-4}$ & $f<10^{-6}$
%		& $f<10^{-2}$ & $f<10^{-4}$ & $f<10^{-6}$
%		& $f<10^{-2}$ & $f<10^{-4}$ & $f<10^{-6}$ \\
%		\midrule
%		R1RSD (proposed)
%		& 698 & 1088 & 1470
%		& 127 & 345 & 596
%		& 165 & 390 & 666 \\
%		BMFC \cite{han2024riemannian}
%		& 50048 & 70712 & 91861
%		& 26599 & 54062 & 79395
%		& 20622 & 33032 & 45052 \\
%		\bottomrule
%	\end{tabular}
%\end{table*}

\subsection{Per-iteration performance and design choices}
In this subsection, we empirically evaluate the per-iteration computational complexity of the proposed method and compare it with state-of-the-art algorithms, including BMFC and SDA. We also investigate the impact of direction-selection and step-size strategies on overall performance. Where applicable, the other parameters are manually tuned individually for all algorithms. 

\subsubsection{Per-iteration times} Figure~\ref{per_iter_time_RSD_BM_SDA} compares the single-iteration wall-clock time of SDA, BMFC, and R1RSD for solving DARE, measured on a system with 256~GB RAM to reduce simulation time. The BMFC algorithm employs an Armijo line-search method with tuned parameters. For each problem instance, we execute 200 iterations and report the mean per-iteration time. The results show that BMFC incurs a significantly higher per-iteration cost than the proposed R1RSD algorithm with 10 power iterations. This overhead arises primarily from the line-search procedure, as BMFC requires, on average, five Armijo line-search steps per iteration.
\begin{figure}[H]
\includegraphics[width=\columnwidth, trim = .5cm 2cm 1cm 1cm clip]{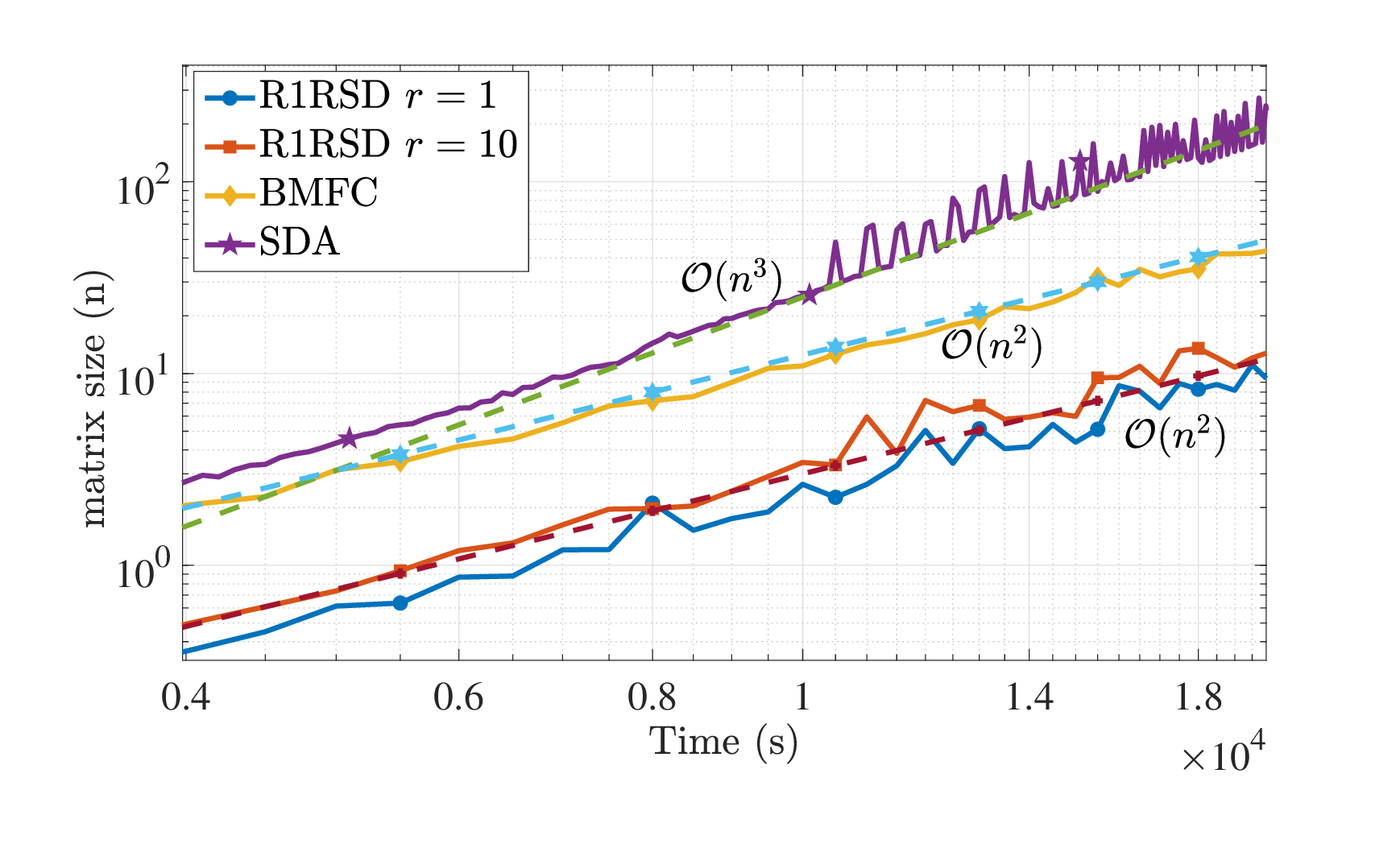}
\caption{Per-Iteration Complexity of R1RSD, BMFC, and SDA for DARE. The number of power iterations performed at each iteration is denoted by $r$.}
\label{per_iter_time_RSD_BM_SDA}
\end{figure} 
\subsubsection{BMFC coordinate selection}
Beyond computational cost, direction-selection strategies play a critical role in determining convergence behavior. For the BMFC method, we investigate both cyclic sampling and sampling-with-replacement strategies for coordinate-direction selection. In small-scale problems $n=100$, cyclic sampling consistently outperforms sampling with replacement in practice, likely due to its more uniform and systematic exploration of the coordinate directions. This empirical advantage is illustrated in Fig.~\ref{sampling_strat}. However, for large-scale problems, the two approaches exhibit comparable performance, suggesting that the benefits of cyclic ordering diminish as the problem dimension increases, as shown in the following subsections.
\begin{figure}[H]
	\includegraphics[width=0.98\columnwidth, trim = 1cm 1cm 8cm 1cm clip]{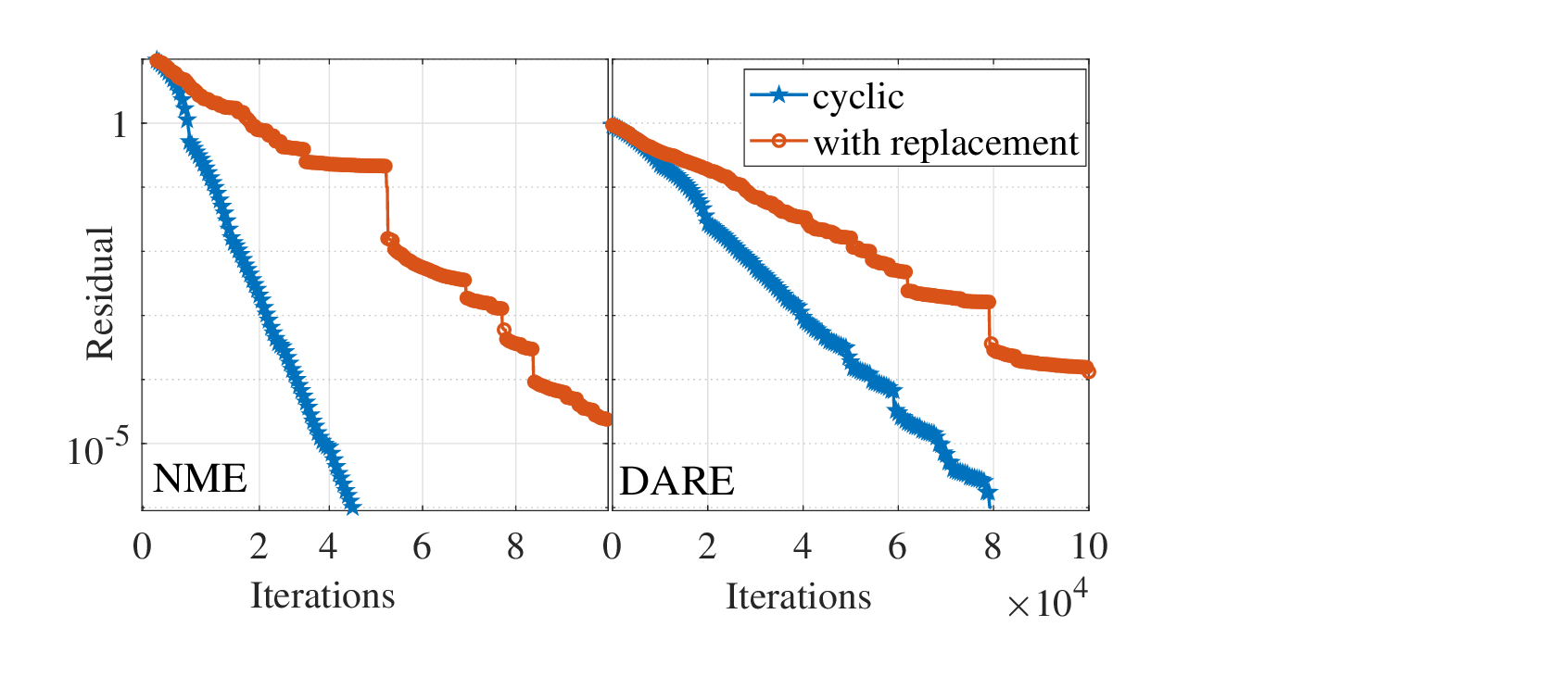}
	\caption{BMFC coordinate selection: cyclic vs sampling with replacement for problems of size $n = 100$.}
	\label{sampling_strat}
\end{figure}

%Beyond computational cost, direction-selection strategies play a critical role in convergence behavior. For the BMFC method, we tested both cyclic sampling  as well as  sampling with replacement coordinate-direction selection methods, as it consistently outperforms sampling with replacement in practice. This empirical advantage is illustrated in Fig.~\ref{sampling_strat}, where the cyclic strategy yields faster convergence and improved stability across problem instances.

\subsubsection{R1RSD direction selection}
For the proposed method, descent directions are computed using the power method. The number of power iterations serves as a tunable parameter that governs the trade-off between convergence quality and per-iteration computational cost. We interpret the single power-iteration variant as a randomized R1RSD method, while the greedy variant is obtained by running the power method for a sufficiently large number of iterations. Fig. \ref{rand_vs_greed} shows the performance of R1RSD for different number of power iterations. These results demonstrate that greedy subspace selection, corresponding to large $r$, substantially outperforms the randomized variant. Interestingly, using just 10 power iterations appears to provide an effective balance between performance and per-iteration cost. Further increasing the number of power iterations yields diminishing performance gains relative to the added computational burden. Notably, even the randomized variant of the proposed method outperforms the BMFC algorithm.

  \begin{figure}[H]
 	\includegraphics[width=\columnwidth, trim = 0cm 0cm 2.5cm 0cm, clip]{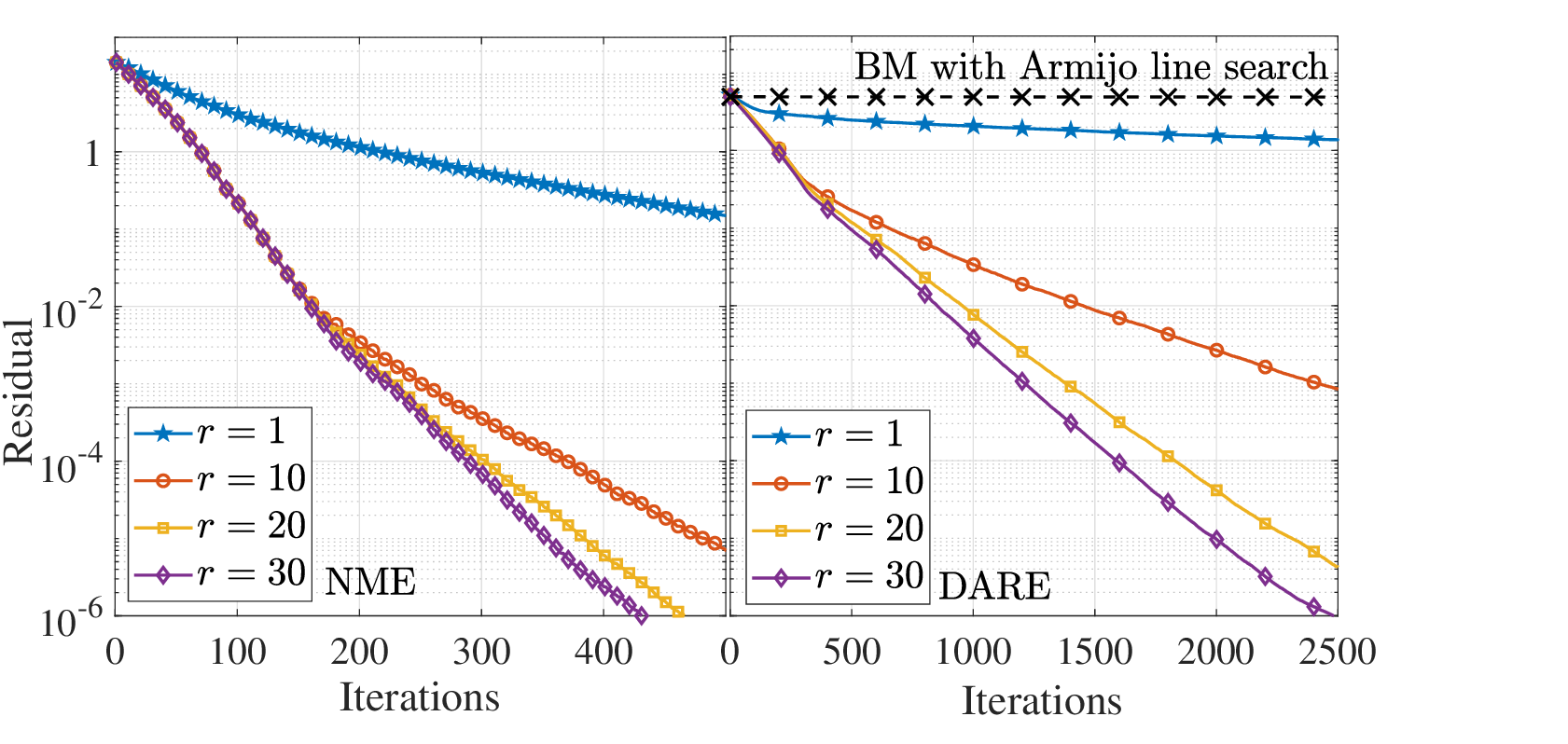}
 	\caption{Performance of R1RSD for different number of power iterations ($r$)}
 	\label{rand_vs_greed}
 \end{figure} 

\subsubsection{R1RSD step size}
Finally, we examine the effect of step-size selection on convergence. Although a tuned fixed step size can be employed, its performance deteriorates when the optimal step size varies significantly across iterations. This behavior is illustrated in Figs.~\ref{stepsize_adapt_vs_fixed} and \ref{greedy_vs_random_stepsize}, where NME~1 and NME~2 correspond to NMEs with different coefficient matrices  $\{\A,\Q\}$. While the fixed step size approximates exact line search reasonably well for NME~2, it performs poorly for NME~1 due to large variations in the optimal step-size $\beta$. In contrast, for the randomized variant of the proposed algorithm, the tuned fixed step size performs comparably to the line-search method.

\begin{figure}[H]
	\includegraphics[width=\columnwidth, trim = 2cm 4cm 3cm 1cm, clip]{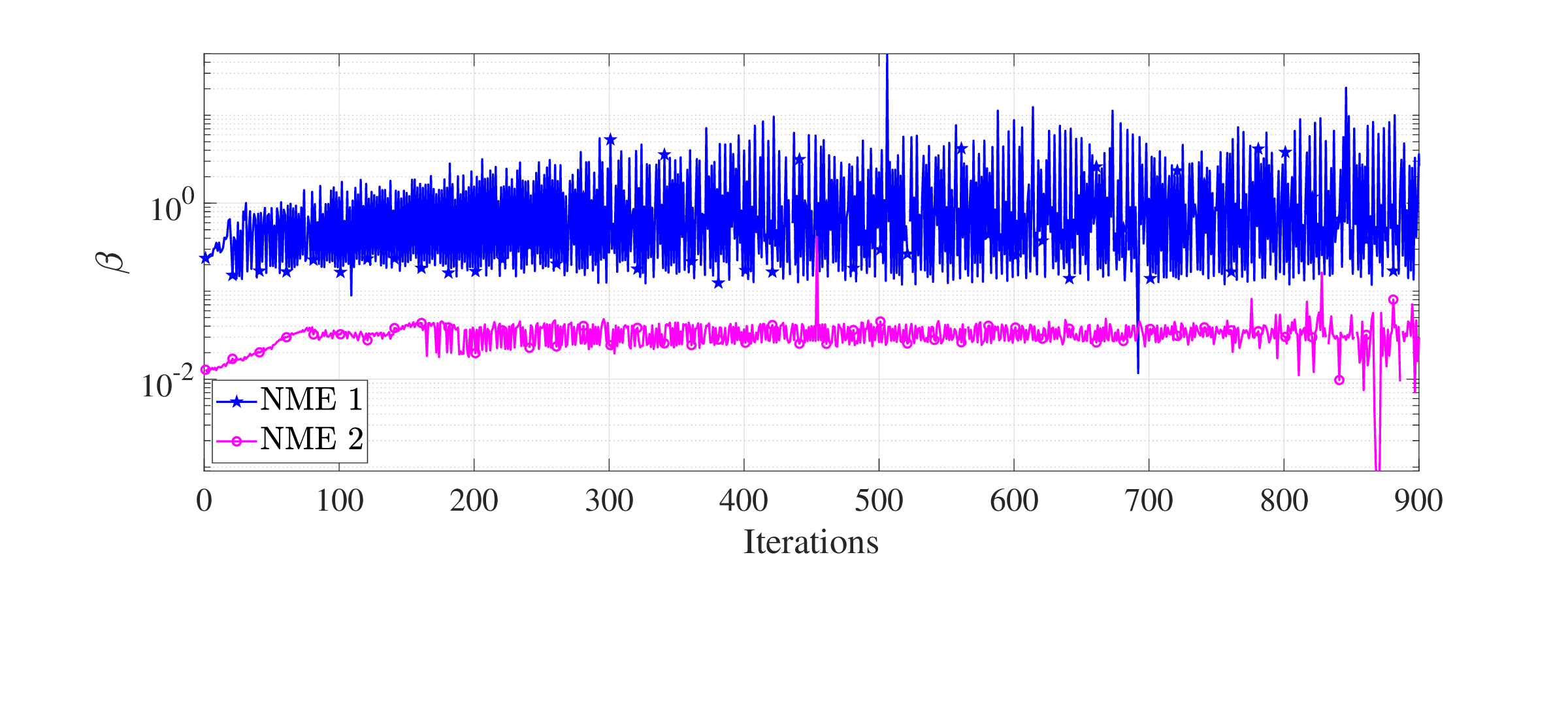}
	\caption{R1RSD step size $\beta$ selected via line search at each iteration for two instances of NME}
	\label{stepsize_adapt_vs_fixed}
\end{figure}

\begin{figure}[H]
	\includegraphics[width=\columnwidth, trim = 2.5cm 4cm 9cm 1cm, clip]{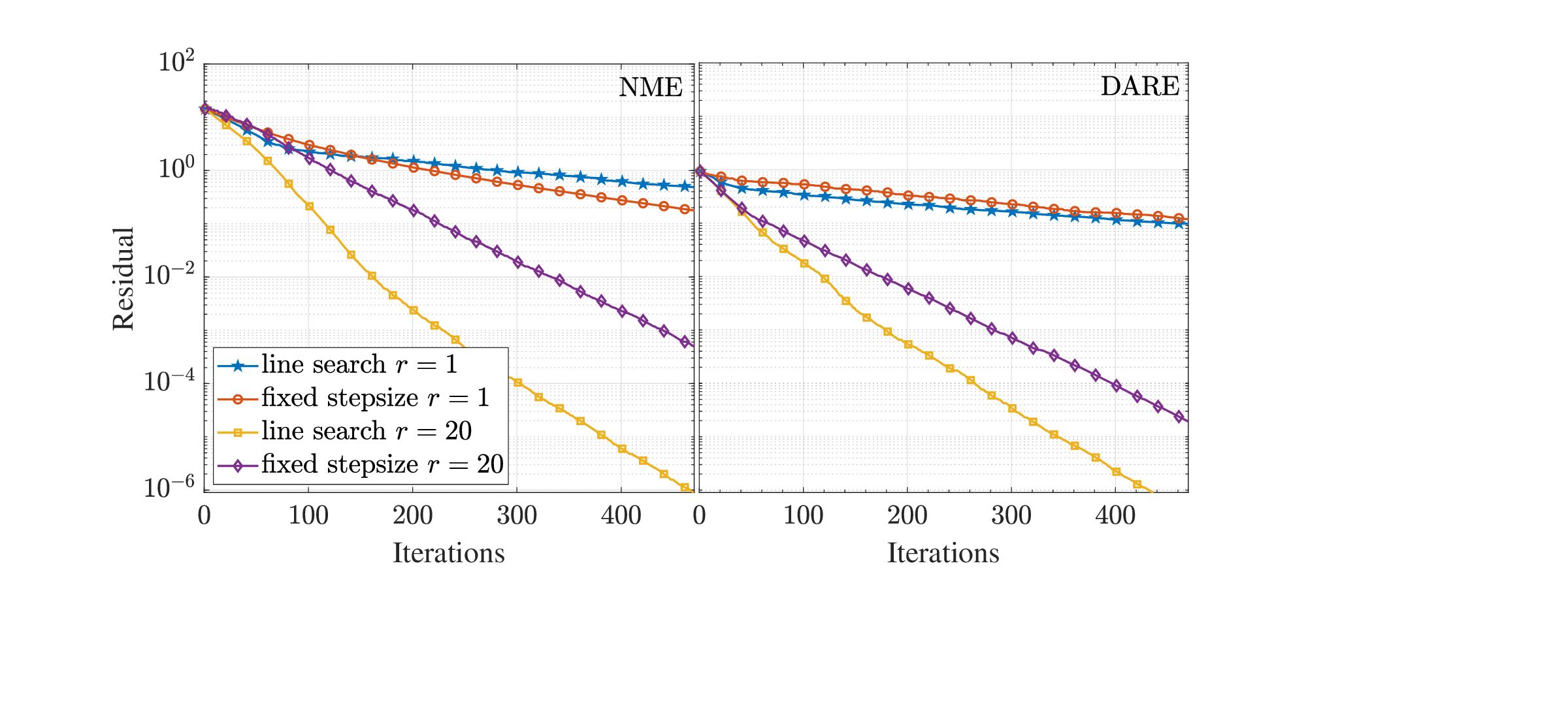}
	\caption{R1RSD step size selection: line search vs. fixed}
	\label{greedy_vs_random_stepsize}
\end{figure}

\begin{table}[t]
	\centering
	\setlength{\tabcolsep}{6pt}
	\renewcommand{\arraystretch}{1.05}
	\caption{Iteration complexities of R1RSD and BMFC for $n=100$: number of iterations required to ensure $f(\X) \leq \epsilon$}
	\label{small_table_result_compare}
	\begin{tabular}{llcc}
		\toprule
		Problem & Target & R1RSD (proposed) & BMFC \cite{han2024riemannian} \\
		\midrule
		\multirow{3}{*}{CARE} & $\epsilon = 10^{-2}$ &   698 & 50048 \\
		& $\epsilon = 10^{-4}$ &  1088 & 70712 \\
		& $\epsilon = 10^{-6}$ &  1470 & 91861 \\
		\midrule
		\multirow{3}{*}{DARE} & $\epsilon = 10^{-2}$ &   127 & 26599 \\
		& $\epsilon = 10^{-4}$ &   345 & 54062 \\
		& $\epsilon = 10^{-6}$ &   596 & 79395 \\
		\midrule
		\multirow{3}{*}{NME}  & $\epsilon = 10^{-2}$ &   165 & 20622 \\
		& $\epsilon = 10^{-4}$ &   390 & 33032 \\
		& $\epsilon = 10^{-6}$ &   666 & 45052 \\
		\bottomrule
	\end{tabular}
\end{table}

\subsubsection{Iteration complexity of small-scale problems} Table~\ref{small_table_result_compare} compares the iteration counts of R1RSD and BMFC for CARE, DARE, and NME at progressively tighter residual tolerances for problems with $n = 100$. Across all three problems and accuracy levels, the proposed R1RSD method consistently converges in one to two orders of magnitude fewer iterations than BMFC. These results highlight the effectiveness of the structured subspace selection and greedy descent strategy employed by R1RSD compared to the coordinate-based updates used in BMFC, even for small problem sizes.

 \subsection{Large scale Benchmarks}
 \subsubsection{Continuous-time algebraic Riccati equation (CARE)}  
%  We apply the proposed algorithm to the residual function $f(\X) = \norm{\X\G\X - \A^\T\X - \X\A - \H}_F^2$. 
 We begin by applying the proposed algorithm to the continuous-time algebraic Riccati equation (CARE), which serves as the first numerical example. The details of the exact line search method used for step size selection in both, the proposed and BFMC algorithms, is provided in Appendix \ref{stepcare}. We generate three instances of \eqref{care} for $n \in \{4000, 5000, 10000\}$ and solve them using the proposed and BMFC algorithms, SDA algorithm, and the MATLAB built-in function \texttt{icare}. When generating the coefficient matrices $\G$, $\A$, and $\H$, we ensured that their condition number was at most 15. The proposed and BMFC algorithms were initialized with the identity matrix. However, the SDA algorithm is sensitive to initialization and therefore it is initialized as specified in the literature \cite{huang2018structureI}. Though the power method is initialized randomly, the performance of the proposed algorithm did not show any variation across multiple runs. In each of the three instances, Table \ref{table_result_compare} shows the residual error, number of iterations, and the per-iteration wall clock time. For the \texttt{icare} algorithm, information about the number of iterations was not available, and only the total running time is reported. For the case of large $n$, both the algorithms were also run on a faster system so that they terminate within a reasonable amount of time. We make the following observation: (a) the convergence rate of the BMFC algorithm is slow, and the residual norm remains high even when the algorithm is run for several days. (b) SDA requires very few iterations but cannot handle $n \geq 5000$ on the machine with 8GB RAM (c) Similarly, \texttt{icare} cannot handle $n \geq 5000$, though its total time for $n = 4000$ is less than that of the proposed algorithm. (d) Unlike all state-of-the-art algorithms and standard solvers, the proposed R1RSD algorithm continues to work for problems with large $n$.

 \begin{table*}[t]
 	\centering
 	\setlength{\tabcolsep}{3pt}
 	\renewcommand{\arraystretch}{1.05}
 	\caption{Performance of various algorithms for solving CARE}
 	\label{table_result_compare}
 	\begin{tabular}{l ccc @{\hspace{8pt}} ccc @{\hspace{8pt}} ccc}
 		\toprule
 		& \multicolumn{3}{c}{$n=4000$}
 		& \multicolumn{3}{c}{$n=5000$}
 		& \multicolumn{3}{c}{$n=10000$} \\
 		\cmidrule(lr){2-4}\cmidrule(lr){5-7}\cmidrule(lr){8-10}
 		Algorithm
 		& residual & iters ($\times 10^5$) & time/iter (s)
 		& residual & iters ($\times 10^5$) & time/iter (s)
 		& residual & iters ($\times 10^5$) & time/iter (s) \\
 		\midrule
 		R1RSD (proposed)
 		& $<10^{-6}$ & 0.8 & 2.5
 		& $<10^{-6}$ & 1 & 3.6
 		& $<10^{-6}$ & 1.9 & 20.4 \\
 		BMFC (with replacement) \cite{han2024riemannian}
 		& $5.0\times 10^3$ & 2.4 & 21.5
 		& $6.2\times 10^3$ & 4.0 & 45.5
 		& $1.2\times 10^4$ & 8.0 & 356.3 \\
 		SDA \cite{huang2018structure}
 		& $<10^{-6}$ & -- & 116
 		& \multicolumn{3}{c}{out of memory}
 		& \multicolumn{3}{c}{out of memory} \\
 		\texttt{icare} (MATLAB)
 		& $<10^{-6}$ & -- & 31{,}204
 		& \multicolumn{3}{c}{out of memory}
 		& \multicolumn{3}{c}{out of memory} \\
 		\bottomrule
 	\end{tabular}
 \end{table*}
 
 \subsubsection{Nonlinear matrix equation}
 As in the earlier section, we compare the performance of the BFMC and the proposed algorithms for solving the NME \eqref{nme} of size $n = 5000$. The problem is created by generating the matrices $\A$ and $\Q$ randomly while ensuring that their condition number is at most 10. Both algorithms are implemented carefully using rank-one updates to ensure that the per-iteration complexity is $\O(n^2)$. Both the algorithms are initialized to $\X=\Q$. Step sizes are selected using the exact line search for both the algorithms and an efficient implementation of the process is detailed in Appendix \ref{nmestep}. As earlier, the SDA fails to work for this problem size on the machine with 8 GB RAM, while the proposed and BMFC algorithm continue to work.  Table~\ref{NME_result_compare} demonstrates the superior performance of the proposed algorithm in terms of both iteration count and total runtime. Interestingly, although the per-iteration computational cost of BMFC is lower than that of the R1RSD algorithm, the overall runtime of R1RSD is significantly smaller than that of BMFC due to its faster convergence. Moreover, the cyclic variant of BMFC exhibits performance comparable to that of the sampling-with-replacement variant.
% \begin{table}[H]
% 	\centering
% 	\setlength{\tabcolsep}{3pt}
% 	%	\renewcommand{\arraystretch}{1.05}
% 	\caption{Performance  of RSD and BMFC for the NME}
% 	\begin{tabular}{l ccc @{\hspace{8pt}} ccc @{\hspace{8pt}} ccc}
% 		\toprule
% 		& \multicolumn{3}{c}{$n=5000$} \\
% 		\cmidrule(lr){2-4}\cmidrule(lr){5-7}\cmidrule(lr){8-10}
% 		Algorithm
% 		& residual & iters ($\times 10^4$) & time/iter (s) \\
% 		\midrule
% 		RSD (proposed)
% 		& $<10^{-6}$ & .57 & 1\\
% 		BMFC \cite{han2024riemannian}
% 		& 9.16 & 1 & 1.3 \\
% 		\bottomrule
% 	\end{tabular}
% \end{table}\label{NME_result_comapre}

\begin{table}[H]
	\centering
	\setlength{\tabcolsep}{3pt}
	\caption{Performance of R1RSD and BMFC for the NME}
	\label{NME_result_compare}
	\begin{tabular}{l c c c c}
		\toprule
		& \multicolumn{4}{c}{$n=5000$} \\
		\cmidrule(lr){2-5}
		Algorithm
		& Variant
		& residual 
		& iters ($\times 10^5$) 
		& time/iter (s) \\
		\midrule
		R1RSD (proposed)
		& -- & $<10^{-6}$ & .31 & 4.1 \\
		\multirow{2}{*}{BMFC \cite{han2024riemannian}}
		& cyclic & 714.05 & 5 & 1.7 \\
		& with replacement & 712.93 & 5 & 1.7 \\
		\bottomrule
	\end{tabular}
\end{table}

 	\subsubsection{Discrete-time algebraic Riccati equation (DARE)}
 	We consider solving \eqref{dare} of size $n = 5000$. The coefficient matrices are generated randomly to ensure that their condition numbers are below 15. As earlier, for the proposed algorithm, the step-size is selected via exact line search and the details are provided in Appendix \ref{stepdare}. For this problem however, exact line search for the BMFC algorithm cannot be realized within the $\O(n^2)$ per-iteration complexity budget and we instead use Amijo line search as also recommended in \cite{han2024riemannian}. Both the algorithms are initialized to $\X=\Q$.
 	Table \ref{dare_result_compare}  shows the performance of the proposed and BMFC algorithms. As earlier, we see that the proposed algorithm is superior in terms of both, the number of iterations required and the time required to ensure that the objective function is small.

 \begin{table}[H]
 	\centering
 	\setlength{\tabcolsep}{3pt}
 	\caption{Performance  of R1RSD and BMFC for the DARE}
 	\label{dare_result_compare}
 	\begin{tabular}{l c c c c}
 		\toprule
 		& \multicolumn{4}{c}{$n=5000$} \\
 		\cmidrule(lr){2-5}
 		Algorithm
 		& Variant
 		& residual 
 		& iters ($\times 10^4$) 
 		& time/iter (s) \\
 		\midrule
 		R1RSD (proposed)
 		& -- & $<10^{-5}$ & 3 & 4.8 \\
 		\multirow{2}{*}{BMFC \cite{han2024riemannian}}
 		& cyclic & 47.05 & 5.6 & 15.4  \\
 		& with replacement & 47.04 & 7.15 & 15.5 \\
 		\bottomrule
 	\end{tabular}
 \end{table}

\section{Conclusion}\label{sec:conclusion}
In this work, we studied the computation of symmetric positive definition (SPD) solution of nonlinear matrix equations by recasting them as residual norm minimization problems on the SPD manifold. We proposed a rank-one Riemannian subspace descent method that updates the iterate along a dominant eigen-component of a transformed Riemannian gradient, identified via a small number of power iterations. The proposed updates admits exact step-size selection for broad classes of objectives, while still incurring only $\O(n^2\log(n))$ cost per-iteration and requiring at most $\O(n)$ iterations. Numerical experiments on large-scale CARE, DARE, and additional nonlinear matrix equations support the analysis and show that the method remains effective in dense regimes where cubic-cost iterations become impractical. In particular, the proposed algorithm solves instances up to $n=10{,}000$ in our tests for which the compared solvers, including MATLAB's \texttt{icare}, structure-preserving doubling algorithms, and subspace-descent baselines, do not return a solution under the same computational budget. These results indicate that rank-one manifold updates can serve as a viable alternative for high-dimensional SPD-constrained matrix equations found in control theory and related areas. 

\appendices

\section{Updates for CARE \eqref{care}}\label{stepcare}
Let us define 
\begin{align}
	\M(\X) &:= \X\G\X - \A^\T\X - \X\A - \H \\
	\N(\X) &:= (\G\X - \A)\M(\X)
\end{align}
where we assume, as is generally the case with CARE, that $\G$ and $\H$ are symmetric. The residual norm function and its gradient becomes:
\begin{align}
	f(\X) &= \norm{\M(\X)}_F^2 \\
	\grd f(\X) &= \N(\X) + \N^\T(\X)
\end{align}
For ease of implementation, we will maintain $\M_t = \M(\X_t)$ and $\N_t = \N(\X_t)$, in addition to $\X_t$ and $\B_t$ for all $t$. Let us define a few intermediate quantities, all of which can be calculated in $\O(n^2)$: 
\begin{align}
	\v_t &= \B_t\y_t & \w_t &= (\X\G - \A^\T)\v_t & \omega_t &= \v_t^\T\G\v_t.
\end{align}
Then the update can be written as $\X_{t+1} = \X_t + \alpha_t \v_t\v_t^\T$. We can maintain $\M_t$ efficiently as
\begin{align}
	\M_{t+1} &= \M_t + \alpha_t (\v_t\w_t^\T + \w_t\v_t^\T) + \alpha_t^2 \omega_t\v_t\v_t^\T\label{mtup}
\end{align}
For the line search, we observe that
\begin{align}
	&f(\X_{t+1}) = \tr{\M_{t+1}^2} = f(\X_t) + 4\alpha_t\v_t^\T\M_t\w_t \nonumber\\
	& + 2\alpha_t^2\left(\omega_1\v_t^\T\M_t\v_t + (\v_t^\T\w_t)^2 + \norm{\v_t}_2^2\norm{\w_t}_2^2\right)\\
	& + 4\alpha_t^3 \omega_t(\v_t^\T\w_t)\norm{\v_t}_2^2 + \alpha_t^4\omega_t^2\norm{\v_t}_2^4
\end{align}
which is a quartic equation in $\alpha_t$ and can be easily minimized with respect to $\alpha_t$. We used MATLAB in-built function \texttt{fminbnd} to search for the optimal value in the range $(-1,10)$. Likewise, writing $\N_{t+1} = (\G\X_{t+1}-\A)\M_{t+1}$ and substituting the updates for $\X_{t+1}$ and $\M_{t+1}$, we obtain the $\O(n^2)$ update for $\N_{t+1}$:
\begin{align}
	\N_{t+1}
	&= \N_t
	+ \alpha_t(\G\X_t-\A)\Big(\v_t\w_t^\T+\w_t\v_t^\T+\alpha_t\omega_t\,\v_t\v_t^\T\Big) \nonumber\\
	&+ \alpha_t(\G\v_t)\Big((\M_t\v_t)^\T
	+ \alpha_t\|\v_t\|_2^2\w_t^\T
	+ \alpha_t(\v_t^\T\w_t)\v_t^\T\Big) \nonumber\\
	&+  \alpha_t^3\omega_t\|\v_t\|_2^2(\G\v_t)\v_t^\T
	\label{Ntup}
\end{align}
obviating the need to calculate the gradient at every iteration. 

The BMFC algorithm uses Burer-Monteiro factorization $\X=\Y\Y^\T$. The update equation has the following rank-one update form $\Y_{t+1}=\Y_t-\delta_t\e_i\e_j^T$. In this case, $\X_t$ can be maintained via a rank-two update as 
\begin{align}
	\X_{t+1} = \X_t + \U_t\Delta_t\U_t^\T \label{Xtup}
\end{align}
where $\U_t = \mat{\Y_t\e_j & \e_i}$ and $\Delta_t = \mat{0 & -\delta_t \\ -\delta_t & \delta_t^2}$. We can write  a similar intermediate variable 
\begin{align}
	\M'(\Y) &= \Y\Y^\T\G\Y\Y^\T - \A^\T\Y\Y^\T - \Y\Y^\T\A - \H
\end{align}
and maintain $\M'_t = \M'(\Y_t)$ for all $t$. Since $\M'(\Y)$ is quartic in $\Y$, a rank-one update to $\Y$ results in a rank four update to $\M'(\Y)$. Defining 
\begin{align}
	\P_t &= (\X_t\G - \A^\T)\U_t\Delta_t & \Delta'_t = \Delta_t \U_t^\T\G\U_t\Delta_t
\end{align} 
Then the update for $\M_{t+1}$ can be written as 
\begin{align}	\label{Mtprime_up}
	&\M'_{t+1} = \M'_t + \P_t\U_t^\T  + \U_t\P_t^\T + \U_t\Delta'_t\U_t^\T\nonumber
\end{align}
Here $\P_t$ and $\U_t$ are $n \times 2$ matrices, so each term can be calculated in $\O(n^2)$ time. Finally, we can update the function value as 
\begin{align}
	&f(\X_{t+1})
	= f(\X_t) +  4\tr{\U_t^\T\M'_t\P_t}  + 2\tr{\U_t^\T\M'_t\U_t\Delta'_t} \nonumber\\
	&+\tr{\Delta'_t(\U_t^\T \U_t)\Delta'_t(\U_t^\T \U_t)\Big)}+2\tr{(\P_t^\T \P_t)(\U_t^\T \U_t)}\nonumber\\
	&+2\tr{(\U_t^\T \P_t)(\U_t^\T \P_t)} +2\tr{\Delta'_t(\U_t^\T \U_t)(\P_t^\T \U_t)}\nonumber\\
	&+2\tr{\Delta'_t(\U_t^\T \P_t)(\U_t^\T \U_t)}.
\end{align}
where all the traces involve $2 \times 2$ matrices and hence the update can be carried out in $\O(n^2)$ time. Additionally, it can be seen that $f(\X_{t+1})$ is quadratic in $\Delta'_t$, quartic in $\Delta_t$ and hence a degree-8 polynomial in $\delta_t$. We used MATLAB built-in function \texttt{fminunc} to minimize the objective. 

\section{Updates for NME \eqref{nme}}\label{nmestep}
For the NME, we assume that $\Q$ is symmetric. Defining 
\begin{align}
	\M(\X) &= \X+ \A^\T\X^{-1}\A - \Q \\
	\N(\X) & = \X^{-1}\A\M(\X)\A^\T\X^{-1}
\end{align}
we have that 
\begin{align}
	f(\X) &= \norm{\M(\X)}_F^2 & \grd f(\X) &= 2\M(\X) - 2\N(\X).
\end{align}
For the update $\X_{t+1} = \X_t + \alpha_t \v_t\v_t^\T$ where $\v_t = \B_t\y_t$, we can efficiently maintain $\X_{t+1}^{-1} = \X_t^{-1} + \mu_t \z_t\z_t^\T$ where $\mu_t = -\frac{\alpha_t}{1+\alpha_t}$ and $\z_t = \B_t^{-T}\y_t$, which can be calculated efficiently by back-substitution. Also defining $\w_t = \A^\T\z_t$, we can maintain $\M_t = \M(\X_t)$ for all $t$ as 
\begin{align}
	\M_{t+1} = \M_t  + \alpha_t \v_t\v_t^\T + \mu_t \w_t\w_t^\T
\end{align}
Likewise, the function value can be updated as
\begin{align}
	f(\X_{t+1}) &= f(\X_t) + 2\alpha_t\v_t^\T \M_t \v_t
	+2\mu_t\w_t^\T\M_t \w_t \nonumber\\
	&+\alpha_t^2\|\v_t\|_2^4+\mu_t^2\|\w_t\|_2^4 +2\alpha_t\mu_t\,(\v_t^\T\w_t)^2 .
\end{align}
which is a rational expression in $\alpha_t$ and must be minimized for exact line search. Finally to maintain the gradient, we need to maintain $\N_t = \N(\X_t)$ for all $t$. For this define
\begin{align*}
	\g_t &:= \X_t^{-1}\A\v_t, &
	\h_t &:= \X_t^{-1}\A\w_t, &
	\s_t &:= \X_t^{-1}\A(\M_{t+1}\w_t), 
\end{align*}
so that the update can be written as
\begin{align}
	\N_{t+1} &= \N_t  + \alpha_t \g_t \g_t^\T + \mu_t \h_t \h_t^\T + \mu_t\big(\z_t \s_t^\T + \s_t \z_t^\T\big) \nonumber\\
	& + \mu_t^2(\w_t^\T \M_{t+1}\w_t) \z_t\z_t^\T .
	\label{Nt_NME_up_compact}
\end{align}

In the BMFC algoithm, since we have the update $\Y_{t+1}=\Y_t-\delta_t\e_i\e_j^T$, the inverse of $\Y_t$ can be maintained as $\Y_{t+1}^{-1} = \Y_t^{-1} + \phi_t\p_t\q_t^\T$ where $\p_t = \Y_t^{-1}\e_i$, $\q_t = \Y_t^{-1}\e_j$, and $\phi_t = \delta_t/(1 -\delta_t \e_i^\T\Y_t^{-1}\e_j)$. We can maintain $\X_{t+1}$ using a rank-two update as in \eqref{Xtup}. In order to maintain $\X_{t+1}^{-1}$, we define $\V_t = \mat{\Y_t^{-T}\p_t & \q_t}$, $\varphi_t = \p_t^\T\p_t$, and
\begin{align}
	\Phi_t = \mat{0 & \phi_t \\ \phi_t & \phi_t^2\varphi_t}
\end{align}
so that we have the rank-two update:
\begin{align}
	\X_{t+1}^{-1} = \X_t^{-1} + \V_t \Phi_t\V_t^\T.
\end{align}
Finally, for the function value update, define $\W_t = \A^\T\V_t$ and $\Delta'_t = \U_t\Delta_t\U_t^\T+\W_t\Phi_t\W_t^\T$. Then the function value update is given by 
\begin{align}
	&f(\X_{t+1})= f(\X_t) +2\tr{\Delta_t\U_t^\T \M_t \U_t} \nonumber\\
	& +2\tr{\Phi_t\W_t^\T \M_t \W_t}+\tr{\Delta_t(\U_t^\T \U_t)\Delta_t(\U_t^\T \U_t)}\nonumber\\
	& +\tr{\Phi_t(\W_t^\T \W_t)\Phi_t(\W_t^\T \W_t)} \nonumber\\
	&+2\tr{\Delta_t(\U_t^\T \W_t)\Phi_t(\U_t^\T \W_t)^\T}
	\label{ft_BMFC_NME}
\end{align}
where all trace operations involve $2 \times 2$ matrices. This is a rational expression in $\delta_t$ and can be numerically minimized with respect to $\delta_t$ for exact line search. 

\section{Updates for DARE \eqref{dare}}\label{stepdare}
We define intermediate matrix variable $\M_{1}(\X)=(\I+\G\X)^{-1}$, so that the residual matrix $\M(\X)$ can written as
\begin{align}
	\M(\X)&=\X-\A^\T\X\M_1(\X)\A-\Q.
\end{align}
We also introduce the following intermediate variables 
\begin{align*}
	\M_2(\X) &= \A\M(\X)\A^\T\M_{1}^\T(\X), &\M_{3}(\X)=  \G^\T\M_1(\X)^\T\X
\end{align*}
so that the gradient of the residual norm $f(\X) = \norm{\M(\X)}_F^2 $ can written as
\begin{align}
	\grd \; f(\X)&=\left[\M(\X)^{\T}+\M_{2}(\X)^\T(-\I+\M_{3}(\X)^\T) \right]\nonumber\\
	& +\left[\M(\X)+(-\I+\M_{3}(\X))\M_{2}(\X) \right]
\end{align}
Observe now that the intermediate variables $\M_t = \M(\X_t)$, $\M_{1,t}=\M_{1}(\X_t)$,  $\M_{2,t}=\M_{2}(\X_t)$ and $\M_{3,t}=\M_{3}(\X_t)$ can be recursively updated. To this end, let us define the quantities
\begin{align}
	\u_t&:=\B_t\y_t & \s_t&=\A^\T\u_t, & \v_t^\T&=\u_t^{\T}\M_{1,t}\A,
\end{align}
\begin{align}
	\h_t^\T &=  \u_t^{\T}\G\M_{1,t}^\T, & \w_t&= \A^\T\X_t\h_t, \\
	a_1&=\u_t^{\T}\A^\T\X_t\M_{1,t}\G\u_t & a_2&=\u_t^\T\h_t,\\
	a_3&=\u_t^{\T}\v_t,&a_4&= \v_t^{\T}\v_t, \\
	b_1&=(1+\alpha_{t}a_2)^{-1} & \r_t& = \s_t - \w_t.
\end{align}
For the update $\X_{t+1} = \X_t + \alpha_t \u_t\u_t^\T$, we can calculate $\M_{1,t+1}=(\I+\G\X_{t+1})^{-1}$ using the recursion 
\begin{align}
	\M_{1,t+1} &=\M_{1,t}-\alpha_tb_1\h_t\u_t^{\T}\M_{1,t}.
\end{align}
To calculate the update for $\M_{t+1} = \X_{t+1} - \A^\T\X_{t+1}\M_{1,t+1}\A-\Q$, observe that
\begin{align*}
	\A^\T\X_{t+1}\M_{1,t+1}\A &= \A^\T\X_t\M_{1,t}\A + \alpha_tb_1\r_t\v_t^\T \\
	\Rightarrow \M_{t+1} &= \M_t + \alpha_t\u_t\u_t^\T - \alpha_tb_1\r_t\v_t^\T
\end{align*}
Hence the function value update takes the form
\begin{align}
	&f(\X_{t+1}) = f(\X_t) +2\alpha_t\,\u_t^\T \M_t \u_t -2\alpha_tb_1\v_t^\T \M_t^\T \r_t \nonumber\\
	&+\alpha_t^2\norm{\u_t}^4 -2\alpha_t^2b_1a_3(\u_t^\T \r_t) +a_4\alpha_t^2b_1^2\norm{\r_t}^2.
\end{align}
which can be calculated in $\O(n^2)$ time and can be minimized with respect to $\alpha_t$ for optimal step-size selection.  Further, we can calculate
\begin{align}
	\M_{2,t+1}^\T\v = \M_{1,t+1}\A\M_{t+1}^\T\A\v \\
	\M_{3,t+1}^\T\u_t = \X_{t+1}\M_{1,t+1}^\T\G\u_t 
\end{align}
so that $\B_t^\T \grd f(\X_{t+1}) \B_t \y_t$ can also be calculated in $\O(n^2)$. 

For the BMFC algorithm, the expression of the residual error is highly nonlinear in the step size $\delta_t$, making the direct computation of an optimal $\delta_t$ impractical within reasonable time. Consequently, we adopt the Armijo line search strategy for step-size selection. This approach requires evaluating the residual matrix 
\begin{align}
	\M(\Y)&=\Y\Y^\T-\A^\T\Y\Y^\T\left(\I+\G\Y\Y^\T\right)^{-1}\A-\Q
\end{align} 
in $\O(n^2)$ time by maintaining intermediate variables. 
Let $\n_1 := \Y_t\e_j$, $\n_2 := \A^\T\n_1$, and $\n_3 := \A^\T\e_i$. Since the update $\Y_{t+1}=\Y_t-\delta_t\e_i\e_j^\T$ induces a rank-one modification, the update of $\X_t$ can be expressed as
\begin{align}
	\X_{t+1}
	=\X_t+\sum_{k=1}^3 \u_{x,k}\v_{x,k}^\T, 	\label{up_BM_X}
\end{align}
where
\begin{align}
	(\u_{x,1},\v_{x,1})&=(-\delta_t\e_i,\n_1),\;
	(\u_{x,2},\v_{x,2})=(-\delta_t\n_1,\e_i),\nonumber\\
	(\u_{x,3},\v_{x,3})&=(\delta_t^2\e_i,\e_i)\nonumber.
\end{align}

Define $\N(\Y)=\I+\G\Y\Y^\T$. The intermediate matrix $\N_t=\N(\Y_t)$ can be updated recursively as
\begin{equation}
	\label{app:N}
	\N_{t+1}
	=\N_t+\sum_{k=1}^3 \u_{n,k}^t(\v_{n,k}^t)^\T,
\end{equation}
with
\begin{align}
	(\u_{n,1}(\Y),\v_{n,1}(\Y))&=(-\delta_t\G\e_i,\Y\e_j),\nonumber\\
	(\u_{n,2}(\Y),\v_{n,2}(\Y))&=(-\delta_t\G\Y\e_j,\e_i),\nonumber\\
	(\u_{n,3}(\Y),\v_{n,3}(\Y))&=(\delta_t^2\G\e_i,\e_i).
\end{align}
and $\u_{n,i}^t=\u_{n,i}(\Y_t)$, $\v_{n,i}^t=\v_{n,i}(\Y_t)$.

The inverse $\N_{t+1}^{-1}$ can be updated in $\mathcal{O}(n^2)$ time by applying successive Sherman–Morrison updates. Specifically, we define
\begin{align}
	\N_{2,t} &= \N_t + \u_{n,1}^t (\v_{n,1}^t)^\T,\
	\N_{1,t} &= \N_{2,t} + \u_{n,2}^t (\v_{n,2}^t)^\T,
\end{align}
and compute their inverses recursively, leading finally to
\begin{align}
	\N_{t+1} = \N_{1,t} + \u_{n,3}^t (\v_{n,3}^t)^\T,
\end{align}
whose inverse is obtained with another rank-one update.

Using these intermediate quantities, the residual matrix
\begin{align}
	\M(\Y)=\X-\A^\T\X\N(\Y)^{-1}\A-\Q
	\label{res_mat}
\end{align}
can be evaluated efficiently. In particular, the term $\A^\T\X\N(\Y)^{-1}\A$ is maintained through the intermediate matrices $\A^\T\X_t\N_{2,t}^{-1}\A$ and $\A^\T\X_t\N_{1,t}^{-1}\A$, which are updated via rank-one corrections. Specifically,
\begin{align}
	\A^\T\X_t\N_{1,t}^{-1}\A
	&= \A^\T\X_t\N_{2,t}^{-1}\A
	+ \frac{\delta_t\p_{1,t}\q_{1,t}^{\T}}{1+ \delta_t m_{1,t}},
\end{align}
where $\p_{1,t}=\A^\T\X_t\N_{2,t}^{-1}\G\Y_t\e_j$, $\q_{1,t}=\A\N_{2,t}^{-\T}\e_i$, and $m_{1,t}=\e_i^\T\N_{2,t}^{-1}\G\Y_t\e_j$, and
\begin{align}
	\A^\T\X_t \N_{2,t}^{-1}\A
	&= \A^\T\X_t\N_t^{-1}\A
	+ \frac{\delta_t\p_{2,t}\q_{2,t}^\T}{1+ \delta_t m_{2,t}},
\end{align}
where $\p_{2,t}=\A^\T\X_t\N_t^{-1}\G\e_i$, $\q_{2,t}=\A\N_{t}^{-\T}\Y_t\e_j$, and $m_{2,t}=\e_j^\T\Y_t^\T\N_t^{-1}\G\e_i$. These relations together yield
\begin{align}
	\A^\T\X_{t+1}\N_{t+1}^{-1}\A
	&= \A^\T\X_t\N_{1,t}^{-1}\A
	-\delta_t\n_3\s_t^\T
	+\r_t\q_t^\T, \label{up_BM_intmat}
\end{align}
with
\begin{align}
	\p_t&=\A^\T\X_t\N_{1,t}^{-1}\G\e_i,\quad
	\q_t= \A\N_{1,t}^{-\T}\e_i,\quad
	\s_t= \A\N_{1,t}^{-\T}\n_1,\\
	\r_t &= \left(\delta_t^2 - \frac{c_4\delta_t^4}{c_1} + \frac{c_2\delta_t^3}{c_1}\right)\n_3
	- \frac{\delta_t^2\p_t}{c_1}
	+ \left(-\delta_t + \frac{c_3\delta_t^3}{c_1}\right)\n_2,\\
	c_1 &= 1+ \delta_t^2\e_i^\T \N_{1,t}^{-1} \G\e_i,\quad
	c_2= \n_1^\T\N_{1,t}^{-1}\G\e_i,\\
	c_3 &= \e_i^\T\N_{1,t}^{-1}\G\e_i,\quad
	c_4= \e_i^\T\N_{1,t}^{-1}\G\e_i.
\end{align}
Consequently, by combining \eqref{up_BM_X} and \eqref{up_BM_intmat}, the residual matrix \eqref{res_mat} can be evaluated in $\mathcal{O}(n^2)$ time, thereby enabling the Armijo line search to be performed with $\mathcal{O}(n^2)$ computational complexity.

\footnotesize

\bibliographystyle{IEEEtran} 
\bibliography{IEEEabrv,maniopt_ref}

\end{document}